\providecommand{\U}[1]{\protect\rule{.1in}{.1in}}
\newtheorem{theorem}{Theorem}[section]
\newtheorem{proposition}[theorem]{Proposition}
\newtheorem{example}[theorem]{Example}
\newtheorem{lemma}[theorem]{Lemma}
\newtheorem{final remark}[theorem]{Final Remark}
\newtheorem{definition}[theorem]{Definition}
\begin{document}

\title{On the representation of linear functionals on hyper-ideals of multilinear operators}
\author{Geraldo Botelho\thanks{Supported by CNPq Grant
304262/2018-8 and Fapemig Grant PPM-00450-17.}\,\, and Raquel Wood\thanks{Supported by a CAPES scholarship.\newline 2020 Mathematics Subject Classification:  47H60, 47A67,  46G25, 47L22, 47B10.\newline Keywords: Operator ideals, multilinear operators, Borel transform, hyper-ideals.
}}
\date{}
\maketitle

\begin{abstract} A standard technique in infinite dimensional holomorphy, which produced several useful results, uses the Borel transform to represent linear functionals on certain spaces of multilinear operators between Banach spaces as multilinear operators. In this paper we develop a technique to represent linear functionals, as linear operators, on spaces of multilinear operators that are beyond the scope of the standard technique. Concrete applications to some well studied classes of multilinear operators, including the class of compact multilinear operators, and to one new class %of hyper-$\sigma(p)$-nuclear operators
 are provided. We can see, in particular, that sometimes our representations hold under conditions less restrictive than those of the related classical ones.
\end{abstract}

\section{Introduction}
The representation of linear functionals is a classical and very useful topic in Functional Analysis and Operator Theory, going back to the celebrated representation theorems due to F. Riesz. Starting with the representation of functionals on sequence spaces and function spaces we all learned at graduate school, the subject spread through all subareas of mathematical analysis. In Infinite Dimensional Holomorphy, the representation of linear functionals on spaces of multilinear operators, polynomials and holomorphic functions has been a valuable tool since the 1960's with the seminal works of Gupta \cite{gupta1, gupta2}, who introduced the use of the Borel transform in the area. Several results followed, for example Dwyer's and Carando-Dimant's representations of functionals on spaces of nuclear multilinear operators/polynomials \cite{dwyer, cardim}, Alencar's representation of linear functionals on spaces of multilinear operators/polynomials that can be approximated by finite type operators \cite{alencar} and Matos' representation of functionals on spaces of $(s;r_1, \ldots,r_n)$-nuclear multilinear operators \cite{matos}. The books \cite{dineen1, dineen2} by Dineen are excellent sources for results of this kind and their many applications. Recent developments, including applications to linear dynamics, can be found, e.g., in \cite{ximenaLAA, boyd, vinari, vinmu1, vinmu2, vindan, muro}.

Let us give a brief description of the Borel transform technique. By $E^*$ we denote the (topological) dual of the Banach space $E$. Given a subspace ${\cal M}(E_1, \ldots, E_n;F)$ of the space of $n$-linear operators from $E_1 \times \cdots \times E_n$ to $F$, where $E_1, \ldots, E_n,F$ are Banach spaces, endowed with a complete norm $\|\cdot\|_{\cal M}$, the idea is to identify a subspace ${\cal R}(E_1^*, \ldots, E_n^*;F^*)$ of the space of $n$-linear operators from $E_1^* \times \cdots \times E_n^*$ to $F^*$  and a complete norm $\|\cdot\|_{\cal R}$ on it such that the Borel transform
$$\beta_n \colon ({\cal M}(E_1, \ldots, E_n;F),\|\cdot\|_{\cal M})^* \longrightarrow ({\cal R}(E_1^*, \ldots, E_n^*;F^*), \|\cdot\|_{\cal R})$$
\vspace*{-1.7em}
$$\beta_n(\phi)(x_1^*, \ldots, x_n^*)(y) = \phi(x_1^*\otimes \cdots \otimes x_n^*\otimes y),  $$
is an isometric isomorphism. By $x_1^*\otimes \cdots \otimes x_n^*\otimes y$ we mean the $n$-linear operator given by
$$(x_1^*\otimes \cdots \otimes x_n^*\otimes y)(x_1, \ldots, x_n) = x_1^*(x_1)\cdots x_n^*(x_n)y. $$
Although very fruitful, this technique works only for spaces ${\cal M}(E_1, \ldots, E_n;F)$ contained in the space of multilinear operators that can be approximated, in the usual operator norm, by operators of finite type (linear combinations of operators of the form $x_1^*\otimes \cdots \otimes x_n^*\otimes y$). The purpose of this paper is to introduce a new technique, called the {\it hyper-Borel transform}, that produces representations of functionals on spaces of multilinear operators larger than the ones encompassed by the standard technique. In Section 2 we give the basics of the hyper-Borel transform and show how a representation of functionals on spaces of linear operators can be passed to a representation of functionals on composition hyper-ideals of multilinear operators. In Section 3 we apply the hyper-Borel transform to, as far as we know for the first time, represent functionals on the well studied classes of approximable, compact, hyper-nuclear and hyper-$(s,r)$-nuclear multilinear operators and to the new class of hyper-$\sigma(p)$-nuclear multilinear operators. %We point out that some of these representations hold under conditions less restrictive than those of the related classical representations %As far as we know, all these representations are original, in particular the case like all the other cases, this is first representation theorem for

It is worth mentioning that the standard and the new techniques are mutually independent in the sense that each representation obtained using one of the techniques cannot be obtained using the other one. The reason is that, on the one hand, the Borel transform works precisely for Banach spaces of multilinear operators in which the operators of finite type are dense. On the other hand, we prove in Section 2 that the hyper-Borel transform works precisely for Banach spaces of multilinear operators in which the finite rank operators are dense -- recall that a vector-valued function has finite rank if its range generates a finite dimensional subspace of the target space. Nevertheless, some of the representations we prove in Section 3 are closely related to classical representations. We point out that sometimes our representations hold under conditions less restrictive than those of the related classical ones. %is neither more nor less general than a result obtained with the other technique.

Multi-ideals of multilinear operators, in the sense of \cite{fg,fh}, contain the finite type operators, and this is the reason why the Borel transform technique is usually applied to represent functionals on multi-ideals. Hyper-ideals of multilinear operators, in the sense of \cite{ewertonlaa, velanga}, contain the finite rank operators, and this is the reason why we shall apply the hyper-Borel transform to represent functionals on hyper-ideals.

By $\mathcal{L}(E_{1},\ldots,E_{n};F)$ we denote the Banach space of $n$-linear continuous operators from $E_1 \times \cdots \times E_n$ to $F$. In the linear case $n = 1$ e simply write ${\cal L}(E;F)$ and the scalar-valued case where $F$ is the scalar field $\mathbb{K} = \mathbb{R}$ or $\mathbb{C}$ we write ${\cal L}(E_1, \ldots, E_n)$. The closed unit ball of $E$ is denoted by $B_E$. Given $A \in {\cal L}(E_1, \ldots, E_n)$ and $y \in F$ we define $A \otimes y \in {\cal L}(E_1, \ldots, E_n;F)$ by $A\otimes y (x_1, \ldots, x_n) = A(x_1, \ldots, x_n)y$. Multilinear operators of finite rank are linear combinations of operators of this kind. For the theory of (spaces of) multilinear operators and some eventual non explained notation we refer to \cite{dineen2, mujica}. Banach ideals of linear operators are taken in the sense of \cite{df, livropietsch}.

\section{The hyper-Borel transform}

In this section, $n\in \mathbb{N}$, $E_1, \ldots, E_n$ and $F$ are Banach spaces.

\begin{proposition}\label{firstpro}
Let $\mathcal{H}(E_{1},\ldots,E_{n};F)$ be a linear subspace of $\mathcal{L}(E_{1},\ldots,E_{n};F)$ endowed with a complete norm  $\|\cdot\|_{\mathcal{H}}$, containing the $n$-linear operators of finite rank and such that $\|A \otimes y\|_{\mathcal{H}} \leq \|A\| \cdot \|y\|$ for all $A \in \mathcal{L}(E_{1},\ldots,E_{n})$ and $ y \in F.$ Then:\\
{\rm (a)} The map
$${\cal B}_n\colon (\mathcal{H}(E_{1},\ldots,E_{n};F) , \|\cdot\|_{\mathcal{H}})^* \longrightarrow \mathcal{L}(\mathcal{L}(E_{1},\ldots,E_{n});F^*)~,~{\cal B}_n(\phi)(A)(y) = \phi(A \otimes y),$$
is a well defined continuous linear operator and $\|{\cal B}_n\| \leq 1$.\\
{\rm (b)} ${\cal B}_n$ is injective if and only if the subspace of finite rank operators is $\|\cdot\|_{\cal H}$-dense in $\mathcal{H}(E_{1},\ldots,E_{n};F)$.
\end{proposition}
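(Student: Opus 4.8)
The plan is to verify part (a) by simply unwinding the definitions and using the hypothesis $\|A \otimes y\|_{\mathcal{H}} \le \|A\|\cdot\|y\|$ to control everything by $\|\phi\|$, and then to prove the equivalence in part (b) by the standard Hahn--Banach separation argument, the key input being the recalled fact that the finite rank operators are precisely the linear combinations of the operators $A \otimes y$.

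For (a), I would first fix $\phi \in \mathcal{H}(E_1,\ldots,E_n;F)^*$ and $A \in \mathcal{L}(E_1,\ldots,E_n)$ and check that $\mathcal{B}_n(\phi)(A)\colon y \mapsto \phi(A \otimes y)$ is a bounded linear functional on $F$. Linearity in $y$ follows because $y \mapsto A \otimes y$ is linear (immediate from $(A \otimes y)(x_1,\ldots,x_n) = A(x_1,\ldots,x_n)\,y$) and $\phi$ is linear. For boundedness I estimate
$$|\phi(A \otimes y)| \le \|\phi\|\,\|A \otimes y\|_{\mathcal{H}} \le \|\phi\|\,\|A\|\,\|y\|,$$
so that $\mathcal{B}_n(\phi)(A) \in F^*$ with $\|\mathcal{B}_n(\phi)(A)\|_{F^*} \le \|\phi\|\,\|A\|$. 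Next, since $A \mapsto A \otimes y$ is linear for fixed $y$, the assignment $A \mapsto \mathcal{B}_n(\phi)(A)$ is linear, and the estimate just obtained shows it is bounded with $\|\mathcal{B}_n(\phi)\| \le \|\phi\|$; hence $\mathcal{B}_n(\phi) \in \mathcal{L}(\mathcal{L}(E_1,\ldots,E_n);F^*)$. Finally, linearity of $\mathcal{B}_n$ in $\phi$ is immediate from the formula, and the same estimate yields $\|\mathcal{B}_n\| \le 1$.

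For (b), I would route both implications through the observation that $\mathcal{B}_n(\phi) = 0$ is equivalent to $\phi(A \otimes y) = 0$ for every $A \in \mathcal{L}(E_1,\ldots,E_n)$ and every $y \in F$, which, since each finite rank operator is a finite linear combination of operators of the form $A \otimes y$, is in turn equivalent to $\phi$ vanishing on the whole subspace of finite rank operators. For the ``if'' direction, assuming the finite rank operators are dense, any $\phi$ with $\mathcal{B}_n(\phi) = 0$ vanishes on a $\|\cdot\|_{\mathcal{H}}$-dense subspace and hence, being continuous, vanishes identically, so $\mathcal{B}_n$ is injective. For the ``only if'' direction I would argue by contraposition: if the finite rank operators are not dense, their closure is a proper closed subspace of $\mathcal{H}(E_1,\ldots,E_n;F)$, so Hahn--Banach provides a nonzero $\phi$ annihilating it; this $\phi$ satisfies $\mathcal{B}_n(\phi) = 0$, showing that $\mathcal{B}_n$ fails to be injective.

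I expect no serious obstacle: every step is a routine consequence of the defining estimate together with Hahn--Banach. The only point requiring a little care is the identification of the finite rank operators with the linear combinations of the $A \otimes y$, which is exactly the description recalled before the statement and which is what links the kernel of $\mathcal{B}_n$ to the annihilator of the finite rank subspace.
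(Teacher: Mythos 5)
Your proposal is correct and follows essentially the same route as the paper: part (a) is the same direct verification using the estimate $\|A\otimes y\|_{\mathcal H}\le\|A\|\cdot\|y\|$, and part (b) is the same Hahn--Banach argument, with the only cosmetic difference being that you phrase the ``only if'' direction contrapositively (non-density gives a nonzero annihilating functional) while the paper argues directly (injectivity forces every annihilating functional to vanish, hence density).
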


\begin{proof} (a) Let us see that ${\cal B}_n$ is well defined. For $A \in \mathcal{L}(E_{1},\ldots,E_{n})$ and $y \in F$, $A \otimes y$ is a rank 1 operator, so it belongs to $\mathcal{H}(E_{1},\ldots,E_{n};F)$ and $\phi$ can be evaluated on it. It is plain that ${\cal B}_n(\phi)(A)$ is linear and the inequality $\|A \otimes y\|_{\mathcal{H}} \leq \|A\| \cdot \|y\|$ shows that it belongs to $F^*$. The linearity of ${\cal B}_n(\phi)$ is also obvious and its continuity follows from
\begin{align*}
\|{\cal B}_n(\phi)(A)\|& = \sup\limits_{y \in B_{F}} |{\cal B}_n(\phi)(A)(y)| = \sup\limits_{y \in B_{F}} |\phi(A \otimes y)|\\
&\leq \sup\limits_{y \in B_{F}} \|\phi\|_{\mathcal{H}^*} \cdot \|A \otimes y\|_{\mathcal{H}} = \|\phi\|_{\mathcal{H}^*} \cdot \sup\limits_{y \in B_{F}} \|A \otimes y \|_{\mathcal{H}}\\
&\leq \|\phi\|_{\mathcal{H}^*} \cdot \sup\limits_{y \in B_{F}} \|A\| \cdot \|y\| = \|\phi\|_{\mathcal{H}^*} \cdot \|A\|.
\end{align*}
Since the linearity of ${\cal B}_n$ is clear, the inequality above gives $\|{\cal B}_n(\phi)\| \leq \|\phi\|_{{\cal H}^*}$, which proves the remaining statements of (a).\\
(b) Suppose that the finite rank operators are $\|\cdot\|_{\cal H}$-dense in $\mathcal{H}(E_{1},\ldots,E_{n};F)$ and let $\phi \in {\rm ker}({\cal B}_n)$. Then
$$\phi\left(\sum_{j=1}^k A_j \otimes y_j \right) = \sum_{j=1}^k \phi(A_j \otimes y_j) = \sum_{j=1}^k {\cal B}_n(\phi)(A_j)(y_j) = 0$$
for all $A \in {\cal L}(E_1, \ldots, E_n)$ and $y \in F$, meaning that $\phi$ vanishes in the dense subspace of finite rank operators. The continuity of $\phi$ implies that  $\phi = 0$.

Assume now that ${\cal B}_n$ is injective and let $\phi \in \mathcal{H}(E_{1},\ldots,E_{n};F)^*$ be a functional that vanishes in the finite rank operators. So,
$${\cal B}_n(\phi)(A)(y) = \phi(A \otimes y) = 0$$
for all $A \in {\cal L}(E_1, \ldots, E_n)$ and $y \in F$, meaning that ${\cal B}_n(\phi)(A) = 0$ for every $A \in {\cal L}(E_1, \ldots, E_n)$. The injectivity of ${\cal B}_n$ gives $\phi = 0$. This proves that the only functional that vanishes on the subspace of finite rank operators is the null functional. This is enough to conclude, by the Hahn-Banach Theorem, that this subspace is $\|\cdot\|_{\cal H}$-dense in $\mathcal{H}(E_{1},\ldots,E_{n};F)$.
\end{proof}

In the linear case we have $\beta_1 = {\cal B}_1 =: {\cal B}$, which shall be referred to as the linear Borel transform.

\begin{example}\rm Let $({\cal H}, \|\cdot\|_{\cal H}) $ be a Banach hyper-ideal of multilinear operators. Then $(\mathcal{H}(E_{1},\ldots,E_{n};F),\|\cdot\|_{\cal H})$ satisfies the conditions of the proposition above for all $n$, $E_1, \ldots,$ $E_n, F$ with $\|A \otimes y\|_{\mathcal{H}} = \|A\| \cdot \|y\|$.
\end{example}

To our purpose of representing linear functions on spaces of multilinear operators, the very first question is when ${\cal B}_n$ is an isomorphism into with respect to the usual operator norm. To settle this question we have to recall the class of hyper-nuclear operators \cite{ewertonlaa}: an operator $A \in \mathcal{L}(E_{1},\ldots,E_{n};F)$ is {\it hyper-nuclear} if there are sequences $(\lambda_{j})_{j=1}^{\infty} \in \ell_{1}$, $(B_{j})_{j=1}^{\infty} \in \ell_{\infty}(\mathcal{L}(E_{1},\ldots,E_{n}))$ and $(y_{j})_{j=1}^{\infty} \in \ell_{\infty}(F)$ such that, for all $x_j \in E_j, j = 1,\ldots, n$,
$$A(x_1, \ldots, x_n) = \sum_{j=1}^\infty \lambda_jB(x_1, \ldots, x_n)y_j. $$
In this case we say that $A = \sum\limits_{j=1}^\infty \lambda_j B_j \otimes y_j$ is a hyper-nuclear representation of $A$. The hyper-nuclear norm of $A$ is defined as
$$\|A\|_{\cal HN}= \inf\sum\limits_{j=1}^{\infty} |\lambda_{j}| \cdot \|B_{j}\| \cdot \|y_{j}\| = \inf \|(\lambda_j)_{j=1}^\infty\|_1\cdot \|(B_j)_{j=1}^\infty\|_\infty \cdot \|(y_j)_{j=1}^\infty\|_\infty,$$
where the infima run over all hyper-nuclear representations of $A$. The class $(\mathcal{HN}, \|\cdot\|_{\mathcal{HN}})$ of hyper-nuclear multilinear operators is a Banach hyper-ideal.

In Theorem \ref{repHN} we shall prove that, in the presence of the approximation property, $[{\cal HN}(E_1, \ldots, E_n;F)]^*$ is isometrically isomorphic to $\mathcal{L}(\mathcal{L}(E_{1},\ldots,E_{n});F^*)$ by means of the hyper-Borel transform ${\cal B}_n$. For the moment we establish that  the only class for which the hyper-Borel transform can be an isomorphism into with respect to the usual operator norm is the class of hyper-nuclear operators.

By $E_1\widehat{\otimes}_{\pi} \cdots \widehat{\otimes}_{\pi} E_{n}$ we denote the completed projective tensor product of $E_1, \ldots, E_n$ and by $A_L$ the linearization of a multilinear operator $A \in {\cal L}(E_1, \ldots, E_n;F)$, that is, $A_L \in \mathcal{L}(E_{1} \widehat{\otimes}_{\pi} \cdots \widehat{\otimes}_{\pi} E_{n};F)$ and $A_L(x_1 \otimes \cdots \otimes x_n) = A(x_1, \ldots, x_n)$ for all $x_j \in E_j$, $j = 1, \ldots, n$  (see \cite{ryan}).

\begin{theorem} Let $\mathcal{H}(E_{1},\ldots,E_{n};F)$ and ${\cal B}_n$ be as in Proposition {\rm \ref{firstpro}} and suppose that $\|\cdot\| \leq \|\cdot\|_{\mathcal{H}}$. If ${\cal B}_n$ is an isomorphism into, then $\mathcal{H}(E_{1},\ldots,E_{n};F) = {\cal HN}(E_1, \ldots, E_n;F)$,  ${\cal B}_n$ is onto $\mathcal{L}(\mathcal{L}(E_{1},\ldots,E_{n});F^*)$ and the norms $\|\cdot\|_{\mathcal{H}}$ and $\|\cdot\|_{\mathcal{HN}}$ are equivalent.
\end{theorem}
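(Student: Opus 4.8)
The plan is to prove the three conclusions simultaneously by identifying the abstract space $\mathcal{H}$ with the concrete hyper-nuclear space through their common dense subspace of finite rank operators. First I would record the structural consequences of the hypotheses. Since $\mathcal{B}_n$ is an isomorphism into, it is in particular injective, so Proposition \ref{firstpro}(b) gives that the finite rank operators are $\|\cdot\|_{\mathcal{H}}$-dense in $\mathcal{H}(E_{1},\ldots,E_{n};F)$; on the other hand, truncating hyper-nuclear representations shows that the finite rank operators are also $\|\cdot\|_{\mathcal{HN}}$-dense in $\mathcal{HN}(E_1, \ldots, E_n;F)$. The inequality $\|A \otimes y\|_{\mathcal{H}} \leq \|A\|\cdot\|y\|$ together with the triangle inequality yields $\|u\|_{\mathcal{H}} \leq \|u\|_{\mathcal{HN}}$ for every finite rank $u$. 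Since both spaces embed continuously into $\mathcal{L}(E_1, \ldots, E_n;F)$ (via $\|\cdot\| \leq \|\cdot\|_{\mathcal{H}}$ and, as is easily checked from hyper-nuclear representations, $\|\cdot\| \leq \|\cdot\|_{\mathcal{HN}}$), the identity on the finite rank operators extends to an injective norm-one operator $\bar\iota \colon \mathcal{HN}(E_1, \ldots, E_n;F) \to \mathcal{H}(E_1, \ldots, E_n;F)$ with dense range, compatible with the inclusions into $\mathcal{L}(E_1, \ldots, E_n;F)$.

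The crucial computation is the identification of the norm of $\mathcal{B}_n(\phi)$ with a hyper-nuclear dual norm. For $\phi \in \mathcal{H}^*$ one has
\[
\|\mathcal{B}_n(\phi)\| = \sup_{\|A\|\leq 1,\ \|y\|\leq 1} |\phi(A\otimes y)|,
\]
and I would show this equals $\|\bar\iota^*\phi\|_{\mathcal{HN}^*}$, where $\bar\iota^*\phi$ is the continuous extension of $\phi$ restricted to the finite rank operators. The key point is that the closed unit ball of $\mathcal{HN}(E_1, \ldots, E_n;F)$ is the closed absolutely convex hull of $\{A\otimes y : \|A\|\leq 1,\ \|y\|\leq 1\}$, immediate from the definition of $\|\cdot\|_{\mathcal{HN}}$; evaluating a continuous linear functional on this ball reduces the supremum to elementary tensors. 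Granting this, the hypothesis that $\mathcal{B}_n$ is bounded below, say $\|\mathcal{B}_n(\phi)\| \geq c\,\|\phi\|_{\mathcal{H}^*}$, becomes $\|\bar\iota^*\phi\|_{\mathcal{HN}^*} \geq c\,\|\phi\|_{\mathcal{H}^*}$, i.e.\ $\bar\iota^*$ is bounded below. By the standard duality (an adjoint bounded below forces the operator to be surjective), $\bar\iota$ is onto; being also injective and continuous between Banach spaces, it is an isomorphism by the open mapping theorem. This simultaneously gives $\mathcal{H}(E_1, \ldots, E_n;F) = \mathcal{HN}(E_1, \ldots, E_n;F)$ as subspaces of $\mathcal{L}(E_1, \ldots, E_n;F)$ and the equivalence of $\|\cdot\|_{\mathcal{H}}$ and $\|\cdot\|_{\mathcal{HN}}$.

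It remains to prove that $\mathcal{B}_n$ is onto $\mathcal{L}(\mathcal{L}(E_{1},\ldots,E_{n});F^*)$, and this is where I expect the genuine difficulty. The natural framework is the isometric identification $\mathcal{L}(\mathcal{L}(E_{1},\ldots,E_{n});F^*) \cong (\mathcal{L}(E_{1},\ldots,E_{n})\widehat{\otimes}_{\pi} F)^*$, under which $\mathcal{B}_n$ becomes the adjoint $q^*$ of the canonical contraction $q \colon \mathcal{L}(E_{1},\ldots,E_{n})\widehat{\otimes}_{\pi} F \to \mathcal{HN}(E_1, \ldots, E_n;F)$, $q(A\otimes y)=A\otimes y$; indeed, since $\|\cdot\|_{\mathcal{HN}}$ is precisely the quotient norm induced by $q$, the map $q$ is a metric surjection and $q^*$ is an isometric embedding with range $(\ker q)^{\perp}$. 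Thus surjectivity of $\mathcal{B}_n$ amounts to $\ker q = \{0\}$, i.e.\ to injectivity of the canonical map. I would try to extract this from the bounded-below hypothesis by arguing that a nonzero element of $\ker q$ pairs, through $\Lambda \mapsto (\phi \mapsto \phi(q(\Lambda)))$, with $\mathcal{H}^*$ to produce a functional detecting the gap between the range of $\mathcal{B}_n$ and the full dual, so that the full strength of ``$\mathcal{B}_n$ is an isomorphism into'' (not merely its injectivity) must be invoked to close the range. Making this reduction rigorous, in particular controlling any loss incurred in passing between $\mathcal{L}(E_{1},\ldots,E_{n})\widehat{\otimes}_{\pi} F$ and its bidual, is the step I anticipate to be the main obstacle.
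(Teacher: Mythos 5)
Your treatment of the conclusions $\mathcal{H}(E_{1},\ldots,E_{n};F)=\mathcal{HN}(E_{1},\ldots,E_{n};F)$ and the equivalence of the norms is correct, and it is a streamlined variant of the paper's argument: the paper factors $\mathcal{B}_n=(V_2\circ V_1)\circ (i\circ T_L)^*$ through $(\mathcal{L}(E_{1},\ldots,E_{n})\widehat{\otimes}_{\pi}F)^*$, where $T_L$ linearizes $(B,y)\mapsto B\otimes y$ and $i\colon \mathcal{HN}\hookrightarrow\mathcal{H}$ is the inclusion, and then applies the criterion ``adjoint bounded below implies surjective'' to $i\circ T_L$; you apply the same criterion directly to $\bar\iota^*$, replacing the tensor-product identifications by the observation that the closed unit ball of $\mathcal{HN}$ is the closed absolutely convex hull of $\{A\otimes y:\|A\|\le 1,\ \|y\|\le 1\}$, which yields $\|\mathcal{B}_n(\phi)\|=\|\bar\iota^*\phi\|_{\mathcal{HN}^*}$. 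One repair is needed: the inequality $\|u\|_{\mathcal{H}}\le\|u\|_{\mathcal{HN}}$ for finite-rank $u$ does \emph{not} follow from the triangle inequality alone, because hyper-nuclear representations are infinite series; you must show the partial sums are $\|\cdot\|_{\mathcal{H}}$-Cauchy and that their $\mathcal{H}$-limit is $u$ (using completeness and $\|\cdot\|\le\|\cdot\|_{\mathcal{H}}$), which is exactly how the paper's proof opens. The same partial-sum argument, run in the $\mathcal{HN}$-norm, is also what makes your unit-ball claim true.

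The genuine gap is the one you flag yourself: surjectivity of $\mathcal{B}_n$ is never proved. Your reduction of it, however, is exactly right, and it shows more than you realize: the missing step cannot be supplied, because that part of the statement is false in general. Since $q\colon \mathcal{L}(E_{1},\ldots,E_{n})\widehat{\otimes}_{\pi}F\to\mathcal{HN}(E_{1},\ldots,E_{n};F)$ is a metric surjection, $\mathcal{B}_n$ coincides, modulo the isometry $\mathcal{L}(\mathcal{L}(E_{1},\ldots,E_{n});F^*)\cong(\mathcal{L}(E_{1},\ldots,E_{n})\widehat{\otimes}_{\pi}F)^*$, with the isometric embedding $q^*$, whose range is $(\ker q)^{\perp}$; hence $\mathcal{B}_n$ is onto if and only if $\ker q=\{0\}$. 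Now take $\mathcal{H}=\mathcal{HN}$ itself: it satisfies every hypothesis of the theorem, and by the above $\mathcal{B}_n$ is automatically an isometric isomorphism into. Yet $\ker q\neq\{0\}$ whenever, say, $E_2=\cdots=E_n=\mathbb{K}$, $F=E_1$ and $E_1$ fails the approximation property, since Grothendieck's characterization of the AP produces a nonzero tensor in $E_1^*\widehat{\otimes}_{\pi}E_1$ inducing the zero operator but having trace one. So there is no ``full strength of the isomorphism-into hypothesis'' left to invoke --- your second paragraph already exhausted it. For comparison, the paper's own proof of surjectivity is only a sketch, and its flaw sits exactly where your reduction predicts: the assertion that the restriction $\varphi$ of $A_L$ to the finite-rank operators is $\|\cdot\|_{\mathcal{HN}}$-continuous amounts to $A_L$ vanishing on $\ker q$, and it fails for $v$ the identity in the example above, where $\varphi$ is the trace functional, not nuclear-norm continuous without AP. This is precisely why Theorem \ref{repHN} must assume the approximation property. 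In short: you prove the first and third conclusions; the second is not provable from the stated hypotheses, by your own (correct) reduction.
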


The condition $\|\cdot\| \leq \|\cdot\|_{\mathcal{H}}$ is a very natural one, for example, multi-ideals, in particular hyper-ideals, enjoy it.

\begin{proof} %Vejamos que $\mathcal{L}_{\mathcal{HN}} \subseteq \mathcal{H}$:
   Given $A \in \mathcal{HN}(E_{1},\ldots,E_{n};F)$ and $\varepsilon > 0$, let %. Então podemos tomar uma representação hiper-nuclear de $A$
$A = \sum\limits_{j=1}^{\infty} \lambda_{j} B_{j} \otimes y_{j}$ be a hyper-nuclear representation of $A$ such that
%,
%\end{equation*}
%$(\lambda_{j})_{j=1}^{\infty} \in \ell_{1}$, $(B_{j})_{j=1}^{\infty} \in \ell_{\infty}(\mathcal{L}(E_{1},\ldots,E_{n}))$ e $(y_{j})_{j=1}^{\infty} \in \ell_{\infty}(F)$, tal que
\begin{equation*}
\sum\limits_{j=1}^{\infty} |\lambda_{j}| \cdot \|B_{j}\| \cdot \|y_{j}\| \leq (1 + \varepsilon)\|A\|_{\mathcal{HN}}.
\end{equation*}
%Como
%\begin{equation*}
%\|A\|_{\mathcal{HN}} = \inf \{ \sum\limits_{j=1}^{\infty} |\lambda_{j}| \cdot \|B_{j}\| \cdot \|y_{j}\| \}
%\end{equation*}
%temos que $\sum\limits_{j=1}^{\infty} |\lambda_{j}| \cdot \|B_{j}\| \cdot \|y_{j}\| < \infty$. Consideremos
For $m \in \mathbb{N}$, set $A_{m} := \sum\limits_{j=1}^{m} \lambda_{j} B_{j} \otimes y_{j} \in \mathcal{H}(E_{1},\ldots,E_{n};F)$.
%\medskip
%\noindent
%De $\sum\limits_{j=1}^{\infty} |\lambda_{j}| \cdot \|B_{j}\| \cdot \|y_{j}\| < \infty$, segue que dado $\varepsilon > 0$,
Let $m_{0} \in \mathbb{N}$ be such that $\sum\limits_{j=k+1}^{\infty} |\lambda_{j}| \cdot \|B_{j}\| \cdot \|y_{j}\| < \varepsilon$ for every  $k\geq m_{0}$. So,
\begin{align*}
\|A_{m} - A_{k}\|_{\mathcal{H}}& = %\left\| \sum\limits_{j=1}^{m} \lambda_{j} B_{j} \otimes y_{j} - \sum\limits_{j=1}^{k} \lambda_{j} B_{j} \otimes y_{j}\|_{\mathcal{H}}\\
  \left\|\sum\limits_{j=k+1}^{m} \lambda_{j} B_{j} \otimes y_{j}\right\|_{\mathcal{H}} \leq \sum\limits_{j=k+1}^{m} \|\lambda_{j}B_{j} \otimes y_{j}\|_{\mathcal{H}} \leq \sum\limits_{j=k+1}^{m} |\lambda_{j}| \cdot \|B_{j}\| \cdot \|y_{j}\| < \varepsilon
\end{align*}
for $m > k \geq m_{0}$, showing that $(A_{m})_{m=1}^\infty$ is a Cauchy sequence in   $\mathcal{H}(E_{1},\ldots,E_{n};F)$. Let $A_{1} \in \mathcal{H}(E_{1},\ldots,E_{n};F)$ be such that $A_{m} \longrightarrow A_{1}$ in $\mathcal{H}(E_{1},\ldots,E_{n};F)$.   Since $\|\cdot\| \leq \|\cdot\|_{\mathcal{H}}$ we have that $A_{m} \longrightarrow A_{1}$ in $\mathcal{L}(E_{1},\ldots,E_{n};F)$. On the other hand,
%\medskip
%\noindent
%Agora, notemos que
\begin{align*}
\|A_{m} - A\|&  %\left \| \sum\limits_{j=m+1}^{\infty} \lambda_{j} B_{j} \otimes y_{j} \right \|\\
 = \sup\limits_{\|x_{j}\|\leq 1} \left \|\sum\limits_{j=m+1}^{\infty} \lambda_{j} B_{j}(x_{1},\ldots,x_{n})y_j \right\| \\
& \leq \sup\limits_{\|x_{j}\|\leq 1} \sum\limits_{j=m+1}^{\infty} |\lambda_{j}| \cdot \|B_{j}\| \cdot \|y_{j}\| \cdot \|x_{1}\| \cdots \|x_{n}\|\\&
 = \sum\limits_{j=m+1}^{\infty} |\lambda_{j}| \cdot \|B_{j}\| \cdot \|y_{j}\| \stackrel{m \to \infty}{\longrightarrow}0,
\end{align*}
showing that $A_{m} \longrightarrow A$ in $\mathcal{L}(E_{1},\ldots,E_{n};F)$, %ou seja, $\|A_{m} - A\| \rightarrow 0$. Portanto
hence $A = A_{1} \in \mathcal{H}(E_{1},\ldots,E_{n};F)$. Moreover,
\begin{align*}
\|A\|_{\mathcal{H}}& = \|A_{1}\|_{\mathcal{H}} = \lim\limits_{m} \|A_{m}\|_{\mathcal{H}} = \lim\limits_{m} \left\| \sum\limits_{j=1}^{m} \lambda_{j}B_{j} \otimes y_{j} \right \|_{\cal H} \\
& \leq \lim\limits_{m}\sum\limits_{j=1}^{m} |\lambda_{j}| \cdot \|B_{j}\| \cdot \|y_{j}\| = \sum\limits_{j=1}^{\infty} |\lambda_{j}| \cdot \|B_{j}\| \cdot \|y_{j}\| \leq (1 + \varepsilon) \|A\|_{\mathcal{HN}}.
\end{align*}
%\medskip
%\noindent
 Making $\varepsilon \rightarrow 0$ we get $\|A\|_{\mathcal{H}} \leq \|A\|_{\mathcal{HN}}$. In particular, the inclusion $i \colon  \mathcal{HN}(E_{1},\ldots,E_{n};F) \hookrightarrow \mathcal{H}(E_{1},\ldots,E_{n};F)$ is a bounded linear operator.

 Now we start proving the reverse inclusion. The assumptions give that
%medskip
%\noindent
%Vejamos que $\mathcal{H} \subseteq \mathcal{L}_{\mathcal{HN}}:$
%\medskip
%\noindent
%$\bullet$ Tome
$$T \colon \mathcal{L}(E_{1},\ldots,E_{n}) \times F \longrightarrow {\mathcal{HN}}(E_{1},\ldots,E_{n};F)~,~T(B,y) = B \otimes y,$$
is a continuous bilinear operator. Consider the linearization
%Então, $T$ é bilinear e contínua. Pela Propriedade Universal do Produto Tensorial, existe
$$T_{L}\colon \mathcal{L}(E_{1},\ldots,E_{n}) \widehat{\otimes}_{\pi} F \longrightarrow {\mathcal{HN}}(E_{1},\ldots,E_{n};F)~, T_L(B \otimes y) = B \otimes y,$$
of $T$ %So, the adjoint of $i \circ T_L$,
%$$(i\circ T_L)^* = T_L^* \circ i^*: (\mathcal{H}(E_{1},\ldots,E_{n};F))^* \longrightarrow (\mathcal{L}(E_{1},\ldots,E_{n}) \widehat{\otimes}_{\pi} F)^*,$$
%is a bounded linear operator.
and the canonical isometric isomorphisms
%\newline
%$ \phi \mapsto u'(\phi)(B \otimes y) = \phi(u(B \otimes y)) = \phi(T(B,y)).$
% Again by Ryan, the operator
%\medskip
%\noindent
%Consideremos as aplicações
$$V_{1}\colon (\mathcal{L}(E_{1},\ldots,E_{n}) \widehat{\otimes}_{\pi} F)^* \longrightarrow \mathcal{L}(\mathcal{L}(E_{1},\ldots,E_{n}),F;\mathbb{K})~,~V_1(\varphi)(B,y) = \varphi(B \otimes y), {\rm ~and}$$
$$V_{2}\colon \mathcal{L}(\mathcal{L}(E_{1},\ldots,E_{n}),F;\mathbb{K}) \longrightarrow \mathcal{L}(\mathcal{L}(E_{1},\ldots,E_{n});F^*)~,~
V_{2}(\xi)(B)(y) = \xi(B,y),$$
(see \cite{mujica, ryan}) and note that $(V_2 \circ V_1)(\varphi)(B)(y) = \varphi(B \otimes y)$. So,
$$V_2 \circ V_1 \circ T_L^* \circ i^* \colon \mathcal{H}(E_{1},\ldots,E_{n};F)^* \longrightarrow \mathcal{L}(\mathcal{L}(E_{1},\ldots,E_{n});F^*)$$
is a bounded linear operator and, for all $\phi \in (\mathcal{H}(E_{1},\ldots,E_{n};F))^*$, $B \in \mathcal{L}(E_{1},\ldots,E_{n})$ and $ y \in F$,
\begin{align*}(V_2 \circ V_1 \circ T_L^* \circ i^* )(\phi)(B)(y) &= (V_2 \circ V_1)((i\circ T_L)^*(\phi))(B)(y)=  (i\circ T_L)^*(\phi)(B \otimes y) \\
 &= \phi(i(T_L))(B\otimes y) = \phi(B \otimes y) = {\cal B}_n(\phi)(B)(y).
 \end{align*}
 This shows that $ V_2 \circ V_1 \circ T_L^* \circ i^* = {\cal B}_n$ is an isomorphism into by assumption, in particular it is injective. Since $ V_2 \circ V_1$ is an isomorphism, we conclude that $ T_L^* \circ i^*$ is injective. Calling $\psi$ the restriction of the isomorphism $(V_2 \circ V_1)^{-1}$ to the range of ${\cal B}_n$, we have that
$$\psi \colon  {\cal B}_n(\mathcal{H}(E_{1},\ldots,E_{n};F)^*)  \longrightarrow (V_2 \circ V_1)^{-1}( {\cal B}_n(\mathcal{H}(E_{1},\ldots,E_{n};F)^*)$$
%$$ \psi({\cal B}_n(\varphi)) = (V_2 \circ V_1)^{-1}({\cal B}_n(\varphi)),
%$$
 is an isomorphism as well. We know that ${\cal B}_n(\mathcal{H}(E_{1},\ldots,E_{n};F)^*)$ is a Banach space because ${\cal B}_n$ is an isomorphism into, so $\psi({\cal B}_n(\mathcal{H}(E_{1},\ldots,E_{n};F)^*))$ is a Banach space too, hence a closed subspace of $(\mathcal{L}(E_{1},\ldots,E_{n}) \widehat{\otimes}_{\pi} F)^*$. For every $\phi \in (\mathcal{H}(E_{1},\ldots,E_{n};F))^*$,
 $$(T_L^* \circ i^*)(\phi) = (V_2 \circ V_1)^{-1} \circ {\cal B}_n (\phi) = \psi({\cal B}_n(\phi)), $$
 thus
 $$(i \circ T_L)^*(\mathcal{H}(E_{1},\ldots,E_{n};F)^*) = \psi({\cal B}_n(\mathcal{H}(E_{1},\ldots,E_{n};F)^*))$$ is a closed subspace of $(\mathcal{L}(E_{1},\ldots,E_{n}) \widehat{\otimes}_{\pi} F)^*$. Since $(i \circ T_L)^*$ is injective, it is an isomorphism into, therefore $i \circ T_L$ is surjective by \cite[Theorem 2.1]{carothers}. Therefore, $i$ is surjective, proving that $\mathcal{HN}(E_{1},\ldots,E_{n};F) = \mathcal{H}(E_{1},\ldots,E_{n};F)$. We have already proved that $i$ is continuous, so the Open Mapping Theorem provides the equivalence of the norms.

We just sketch the proof of the surjectivity of ${\cal B}_n$. Given $v \in \mathcal{L}(\mathcal{L}(E_{1},\ldots,E_{n});F^*)$, call $A_L \colon \mathcal{L}(E_{1},\ldots,E_{n}) \widehat{\otimes}_{\pi} F \longrightarrow \mathbb{K}$ the linearization of the continuous bilinear operator
$$A\colon \mathcal{L}(E_{1},\ldots,E_{n}) \times F \longrightarrow \mathbb{K}~,~A(B,y) = v(B)(y).$$
Identifying the space of finite rank multilinear operators with $\mathcal{L}(E_{1},\ldots,E_{n})\otimes F $ in the obvious way, we can consider the restriction $\varphi$ of $A_L$ to $\mathcal{L}(E_{1},\ldots,E_{n})\otimes F \subset \mathcal{HN}(E_{1},\ldots,E_{n};F)$. It is easy to see that $\varphi \in (\mathcal{L}(E_{1},\ldots,E_{n})\otimes F, \|\cdot\|_{\cal HN})^*$. In the beginning of the proof we showed that the partial sums of a hyper-nuclear representation converge in the norm $\|\cdot\|_{\cal HN}$ to the hyper-nuclear operator, which gives that $\mathcal{L}(E_{1},\ldots,E_{n})\otimes F $ is dense in $\mathcal{HN}(E_{1},\ldots,E_{n};F)$. Let $\widetilde{\varphi}$ be (unique) bounded linear extension of $\varphi$ to $\mathcal{HN}(E_{1},\ldots,E_{n};F)$. Then, for all $B \in \mathcal{L}(E_{1},\ldots,E_{n})$ and $y \in F$,
$${\cal B}_n(\widetilde{\varphi})(B)(y) = \widetilde{\varphi}(B \otimes y) = \varphi(B \otimes y) = A_{L}(B \otimes y) = A(B,y) = v(B)(y),$$
that is, ${\cal B}_n(\widetilde{\varphi}) = v$.
\end{proof}

Now it is clear that, for any class $\cal H$ different from the hyper-nuclear operators, to represent linear functionals on $\mathcal{H}(E_{1},\ldots,E_{n};F)$ it is necessary to replace the usual norm on ${\cal B}_n\hspace*{-0.2em}\left(\mathcal{H}(E_{1},\ldots,E_{n};F)^* \right) \subseteq \mathcal{L}(\mathcal{L}(E_{1},\ldots,E_{n});F^*)$ with some suitable norm. The obvious guess is to consider Banach operator ideal norms, but the following more general notion shall prove later to be more appropriate.

\begin{definition}\rm A {\it left semi-operator ideal} is a correspondence $\alpha$ that to each pair  $(E,F)$ of Banach spaces assigns a linear subspace $\mathcal{L}_{\alpha}(E;F^*)$ of $\mathcal{L}(E;F^*)$ endowed with a complete norm $\|\cdot\|_{\alpha}$ satisfying the left ideal property: if $v \in \mathcal{L}(E;F)$ and $u \in \mathcal{L}_{\alpha}(F;G^*)$, then $u \circ v \in \mathcal{L}_{\alpha}(E;G^*)$ and
\begin{equation}\label{semi}\|u \circ v \|_{\alpha} \leq \|u\|_{\alpha} \cdot \|v\|.
\end{equation}
The symbol $\mathcal{L}_{\alpha}(E;F^*)$ shall henceforth mean the Banach space $(\mathcal{L}_{\alpha}(E;F^*),\|\cdot\|_\alpha)$.
\end{definition}

\begin{example}\label{exclass}\rm (a) Every Banach operator ideal is a left semi-operator ideal.\\
(b) Let us see a left semi-operator ideal that is not an operator ideal and shall be useful later. Consider the following class introduced in \cite{ximenaLAA} (see also \cite{ximenaStudia}): for $1 \leq p < \infty$, an operator $u \in {\cal L}(E;F^*)$ is   quasi-$\tau(p)$-summing, in symbols $u \in \Pi_{q\tau(p)}(E;F^*)$, if there exists a constant $C \geq 0$ such that
\begin{equation}\label{eqeu}
\left( \sum\limits_{j=1}^{m} |u(x_j)|^{p}\right)^{\frac{1}{p}} \leq C \cdot \sup\limits_{x^* \in B_{E^*}, y^* \in B_{F^*}} \left(\sum\limits_{j=1}^{m} x^*(x_j)y^*(y_{j})|^{p} \right)^{\frac{1}{p}},
\end{equation}
for all $m \in \mathbb{N}$, $x_j \in E$, $y_{j} \in F$,  $j = 1,\ldots,m$. The infimum of such constants $C$ is denoted by $\|u\|_{q\tau(p)}$. A routine computation shows that $(\Pi_{q\tau(p)}(E;F^*), \|\cdot\|_{q\tau(p)})$ is a Banach space. Given $v \in \mathcal{L}(E;F)$ and $u \in \Pi_{q\tau(p)}(F;G^*)$, let $C$ be as in (\ref{eqeu}). For  $m \in \mathbb{N}$, $x_j \in E$, $w_{j} \in G$,  $j = 1,\ldots,m$,
\begin{align*}
\left(\sum\limits_{j=1}^{m} |(T \circ v)(x_{j})(w_{j})|^{p}\right)^{\frac{1}{p}}
 &\leq C \cdot \sup\limits_{y^* \in B_{F^*},w^* \in B_{G^*}} \left(\sum\limits_{j=1}^{m} |y^*(v(x_{j}))w^*(w_{j})|^{p}\right)^{\frac{1}{p}}  \\
& = C \cdot \|v\| \cdot \sup\limits_{y^* \in B_{F^*},w^* \in B_{G^*}} \left(\sum\limits_{j=1}^{m} \left|\left(y^* \circ \frac{v}{\|v\|}\right)(x_{j})w^*(w_{j})\right|^{p}\right)^{\frac{1}{p}} \\
& \leq C \cdot \|v\| \cdot \sup\limits_{x^* \in B_{E^*},w^* \in B_{G^*}} \left(\sum\limits_{j=1}^{m} |x^*(x_{j})w^*(w_{j})|^{p}\right)^{\frac{1}{p}},
\end{align*}
which shows that $(u \circ v) \in \Pi_{q\tau(p)}(E;G^*)$ and $\|u \circ v \|_{q\tau(p)} \leq \|u\|_{q\tau(p)} \cdot \|v\|.$\\
(c) The class of weak$^*$-sequentially compact operators $u \colon E \longrightarrow F^*$  ($(u(x_j))_{j=1}^\infty$ admits a weak$^*$ convergent subsequence in $F^*$ for every bounded sequence $(x_j)_{j=1}^\infty$ in $E$) satisfies the left ideal property obviously. In the proof of \cite[Proposition 3.3.2]{ximenaStudia} it is proved that it does not satisfy the right ideal property.
\end{example}

For the problem of extending a left semi-operator ideal to a Banach operator ideal we refer to \cite{ximenaStudia}.

Using standard techniques from the theory of Banach operator ideals it is easy to prove the following lemma. This is where inequality (\ref{semi}) comes into play.

\begin{lemma}\label{ammel} Let $\alpha$ be a left semi-operator ideal and let $v\colon E \longrightarrow F$ be an (isometric) isomorphism. Then, for every Banach space $G$, the composition operator
$$J_{v}\colon \mathcal{L}_{\alpha}(F;G^*) \longrightarrow \mathcal{L}_{\alpha}(E;G^*)~,~J_v(T)= T \circ v,$$
is an (isometric) isomorphism as well.
\end{lemma}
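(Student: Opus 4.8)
The plan is to show that $J_v$ is a well-defined bounded linear operator that admits a bounded inverse, and that when $v$ is isometric both $J_v$ and its inverse have norm at most $1$. First I would verify that $J_v$ is well defined: given $T \in \mathcal{L}_\alpha(F;G^*)$, the composition $T \circ v$ lands in $\mathcal{L}_\alpha(E;G^*)$ directly by the left ideal property \eqref{semi} applied with the bounded linear map $v\colon E \longrightarrow F$ in the role of the ``$v$'' there and $T$ in the role of ``$u$''; the same inequality gives $\|J_v(T)\|_\alpha = \|T \circ v\|_\alpha \leq \|T\|_\alpha \cdot \|v\|$, so $J_v$ is bounded with $\|J_v\| \leq \|v\|$. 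Linearity is immediate since composition with a fixed $v$ is linear in $T$.

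The key step is to produce the inverse. Since $v$ is an isomorphism, $v^{-1}\colon F \longrightarrow E$ is a bounded linear operator, so I would consider the candidate inverse $J_{v^{-1}}\colon \mathcal{L}_\alpha(E;G^*) \longrightarrow \mathcal{L}_\alpha(F;G^*)$, $J_{v^{-1}}(S) = S \circ v^{-1}$, which is well defined and bounded with $\|J_{v^{-1}}\| \leq \|v^{-1}\|$ by the same application of \eqref{semi}. Then a routine check gives $J_{v^{-1}} \circ J_v (T) = T \circ v \circ v^{-1} = T$ and $J_v \circ J_{v^{-1}}(S) = S \circ v^{-1} \circ v = S$, so $J_{v^{-1}} = (J_v)^{-1}$. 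This shows $J_v$ is an isomorphism, with $\|J_v\| \leq \|v\|$ and $\|(J_v)^{-1}\| \leq \|v^{-1}\|$.

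For the isometric case I would observe that when $v$ is an isometric isomorphism we have $\|v\| \leq 1$ and $\|v^{-1}\| \leq 1$, hence $\|J_v\| \leq 1$ and $\|(J_v)^{-1}\| \leq 1$; combining these with the identities $T = (J_v)^{-1}(J_v(T))$ forces $\|T\|_\alpha \leq \|J_v(T)\|_\alpha \leq \|T\|_\alpha$, so $\|J_v(T)\|_\alpha = \|T\|_\alpha$ for every $T$, i.e.\ $J_v$ is an isometry.

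I do not expect any genuine obstacle here: the statement is, as the authors indicate, a formal consequence of the left ideal inequality \eqref{semi}, and the whole content is bookkeeping with the composition $T \mapsto T \circ v$. The only point that requires a moment's care is making sure \eqref{semi} is invoked in the correct slot --- it controls postcomposition with an $\alpha$-operator by precomposition with an arbitrary bounded operator, which is exactly the orientation needed both for $J_v$ (using $v$) and for its inverse (using $v^{-1}$); the right ideal property is never needed, which is consistent with the fact that $\alpha$ is only assumed to be a \emph{left} semi-operator ideal.
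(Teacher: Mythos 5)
Your proposal is correct, and it is precisely the argument the paper intends: the authors omit the proof, remarking only that it follows by standard operator-ideal techniques and that inequality (\ref{semi}) is the key point, which is exactly how you use it (precomposition with $v$ for boundedness of $J_v$, with $v^{-1}$ for the inverse, and the two norm estimates combined for the isometric case). No gaps; nothing further is needed.
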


Suppose that the linear Borel transform
\begin{equation}\label{borlin}{\cal B} \colon ({\cal I}(E,F),\|\cdot\|_{\cal I})^* \longrightarrow {\cal L}_\alpha(E^*;F^*)~,~{\cal B}(\phi)(x^*)(y) = \phi(x^*\otimes y),
\end{equation}
is an isometric isomorphism for some Banach spaces $E$ and $F$, some normed class ${\cal I}(E,F)$ of operators from $E$ to $F$ and some normed class ${\cal L}_\alpha(E^*;F^*)$ of operators from $E^*$ to $F^*$. Next theorem shows how this linear representation can be used to represent linear functionals on composition ideals of multilinear operators, which go back to Pietsch \cite{pietsch83}: Given a Banach operator ideal $({\cal I},\|\cdot\|_{\cal I})$, a multilinear operator $A \in {\cal L}(E_1, \ldots, E_n;F)$ belongs to ${\cal I}\circ {\cal L} $ if there are a Banach space $G$, and operators $B \in {\cal L}(E_1, \ldots, E_n;G)$ and $u \in {\cal I}(G;F)$ such that $A = u \circ B$. In this case,
$$\|A\|_{{\cal I}\circ {\cal L}} := \inf\{\|B\|\cdot\|u\|_{\cal I} : A = u \circ B, u \in {\cal I}\}.  $$
It is well known that ${\cal I}\circ {\cal L}$ is a Banach hyper-ideal of multilinear operators \cite{prims, pietsch83}, hence ${\cal I}\circ {\cal L}(E_1, \ldots, E_n;F)$ satisfies the conditions of Proposition \ref{firstpro} for all Banach spaces $E_1, \ldots, E_n,F$.

By a geometric property of Banach spaces we mean a property that is invariant under isometric isomorphisms.

\begin{theorem}\label{tthh} Let $(\mathcal{I}, \|\cdot\|_{\cal I})$ be a Banach operator ideal,  $\alpha$ be a left semi-operator ideal and $P_{1}$ and $P_{2}$ be geometric properties of Banach spaces. The following are equivalent:\\
{\rm (a)} For all Banach spaces $E$ and $F$ such that $E^*$ has $P_{1}$ and $F$ has $P_{2}$, the linear Borel transform {\rm (\ref{borlin})}
%$${\cal B} \colon ({\cal I}(E,F),\|\cdot\|_{\cal I})^* \longrightarrow {\cal L}_\alpha(E^*;F^*)~,~{\cal B}(\phi)(x^*)(y) = \phi(x^*\otimes y),  $$
is an isometric isomorphism.\\
{\rm (b)} For every $n \in \mathbb{N}$ and all Banach spaces  $E_{1},\ldots,E_{n}$ and $F$ such that $\mathcal{L}(E_{1},\ldots,E_{n})$ has $P_{1}$ and $F$ has $P_{2}$, the hyper-Borel transform
\begin{equation}\label{btra}{\cal B}_n \colon (\mathcal{I \circ L}(E_{1},\ldots,E_{n};F),\|\cdot\|_{\mathcal{I \circ L}})^* \longrightarrow \mathcal{L}_{\alpha}(\mathcal{L}(E_{1},\ldots,E_{n});F^*)~,~ {\cal B}_n(\phi)(A)(y) = \phi(A \otimes y),
\end{equation}
is an isometric isomorphism.\\
{\rm (c)} For some $n \geq 2$ and all Banach spaces  $E_{1},\ldots,E_{n}$ and $F$ such that $\mathcal{L}(E_{1},\ldots,E_{n})$ has $P_{1}$ and $F$ has $P_{2}$, the hyper-Borel transform {\rm (\ref{btra})} is an isometric isomorphism.
\end{theorem}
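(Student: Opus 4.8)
The plan is to prove the cyclic chain of implications $(a)\Rightarrow(b)\Rightarrow(c)\Rightarrow(a)$. The implication $(b)\Rightarrow(c)$ is immediate, since $(c)$ is just the restriction of the universally quantified statement $(b)$ to a single fixed $n\geq 2$. The genuine content lies in $(a)\Rightarrow(b)$ and in the converse direction $(c)\Rightarrow(a)$, and the key technical device for both is the linearization isometry that identifies a composition hyper-ideal of multilinear operators with a composition (linear) ideal on the projective tensor product.

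For $(a)\Rightarrow(b)$, the first step is to record the canonical isometric isomorphism
$$\Phi\colon \mathcal{I\circ L}(E_{1},\ldots,E_{n};F) \longrightarrow \mathcal{I}\big(E_{1}\widehat{\otimes}_{\pi}\cdots\widehat{\otimes}_{\pi}E_{n};F\big),\quad \Phi(A)=A_{L},$$
which sends $A=u\circ B$ to $u\circ B_{L}$ and is isometric by the very definition of the $\|\cdot\|_{\mathcal{I\circ L}}$-norm together with the universal property of $\widehat{\otimes}_{\pi}$ (so that $\|B_{L}\|=\|B\|$). Writing $E:=E_{1}\widehat{\otimes}_{\pi}\cdots\widehat{\otimes}_{\pi}E_{n}$, the dual $\Phi^{*}$ transports $\mathcal{I\circ L}(E_{1},\ldots,E_{n};F)^{*}$ isometrically onto $\mathcal{I}(E;F)^{*}$. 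Now the hypothesis $(a)$ applies to $E$ and $F$: one must check that $E^{*}=\mathcal{L}(E_{1},\ldots,E_{n})$ has $P_{1}$, which holds because $\mathcal{L}(E_{1},\ldots,E_{n})$ and $(E_{1}\widehat{\otimes}_{\pi}\cdots\widehat{\otimes}_{\pi}E_{n})^{*}$ are isometrically isomorphic (the defining duality of the projective tensor product) and $P_{1}$ is a geometric property; $F$ has $P_{2}$ by assumption. Thus the linear Borel transform $\mathcal{B}\colon \mathcal{I}(E;F)^{*}\to\mathcal{L}_{\alpha}(E^{*};F^{*})$ is an isometric isomorphism. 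The final step is to chase the diagram
$$\mathcal{B}\circ\Phi^{*}\colon \mathcal{I\circ L}(E_{1},\ldots,E_{n};F)^{*}\longrightarrow \mathcal{L}_{\alpha}(E^{*};F^{*})=\mathcal{L}_{\alpha}\big(\mathcal{L}(E_{1},\ldots,E_{n});F^{*}\big)$$
and verify that it coincides with $\mathcal{B}_{n}$. Concretely, for $\phi$ a functional, $B\in\mathcal{L}(E_{1},\ldots,E_{n})\cong E^{*}$ and $y\in F$,
$$(\mathcal{B}\circ\Phi^{*})(\phi)(B)(y)=\mathcal{B}(\Phi^{*}(\phi))(B)(y)=\Phi^{*}(\phi)(B\otimes y)=\phi(\Phi^{-1}(B\otimes y))=\phi(B\otimes y)=\mathcal{B}_{n}(\phi)(B)(y),$$
the penultimate equality holding because $\Phi$ sends the rank-one multilinear operator $B\otimes y$ to the rank-one linear operator $B\otimes y$ on $E$. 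Here I would invoke Lemma \ref{ammel} to guarantee that identifying $E^{*}$ with $\mathcal{L}(E_{1},\ldots,E_{n})$ via an isometric isomorphism does not disturb the $\alpha$-structure on the target, so that the codomains match isometrically. Being a composition of isometric isomorphisms, $\mathcal{B}_{n}$ is then an isometric isomorphism.

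For $(c)\Rightarrow(a)$, I would reverse the construction, but with care: the aim is to recover, for \emph{arbitrary} Banach spaces $E,F$ with $E^{*}$ having $P_{1}$ and $F$ having $P_{2}$, the linear statement from a single multilinear instance. Given such $E$, the trick is to realize $E$ as (isometrically isomorphic to) a factor space built from the fixed $n\geq 2$. The cleanest route is to take $E_{1}=E$ and $E_{2}=\cdots=E_{n}=\mathbb{K}$, so that $E_{1}\widehat{\otimes}_{\pi}\cdots\widehat{\otimes}_{\pi}E_{n}$ is isometrically isomorphic to $E$ and $\mathcal{L}(E_{1},\ldots,E_{n})$ is isometrically isomorphic to $E^{*}$; then $\mathcal{L}(E_{1},\ldots,E_{n})$ has $P_{1}$ precisely because $E^{*}$ does, and $F$ has $P_{2}$ by hypothesis, so $(c)$ applies to these spaces. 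Running the same diagram chase as above backwards shows that $\mathcal{B}$ is an isometric isomorphism, which is statement $(a)$.

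The step I expect to be the main obstacle is not any single computation but the bookkeeping of the isometric identifications in $(c)\Rightarrow(a)$, specifically confirming that with the degenerate choice $E_{2}=\cdots=E_{n}=\mathbb{K}$ the spaces $\mathcal{I\circ L}(E_{1},\ldots,E_{n};F)$ and $\mathcal{I}(E;F)$ are isometric, that the rank-one operators correspond correctly under this identification, and that $P_{1}$ really does transfer to $\mathcal{L}(E_{1},\ldots,E_{n})$. If the degenerate choice is for some reason undesirable (for instance if one prefers to stay within genuinely $n$-linear spaces), an alternative is to absorb all but one slot into a single space and invoke the associativity and the isometric behaviour of $\widehat{\otimes}_{\pi}$ under $\mathbb{K}$, but this only relocates the same bookkeeping. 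Throughout, Lemma \ref{ammel} is the workhorse that lets me pre-compose with the linearization isometries without losing the $\alpha$-norm, and the geometric (i.e.\ isometric-invariance) hypotheses on $P_{1}$ and $P_{2}$ are exactly what make the transfer of the properties along these identifications legitimate.
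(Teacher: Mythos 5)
Your proposal is correct and follows essentially the same route as the paper: linearization onto $\mathcal{I}(E_{1}\widehat{\otimes}_{\pi}\cdots\widehat{\otimes}_{\pi}E_{n};F)$ combined with Lemma \ref{ammel} and the transfer of $P_{1}$ along $\ell\colon\mathcal{L}(E_{1},\ldots,E_{n})\to(E_{1}\widehat{\otimes}_{\pi}\cdots\widehat{\otimes}_{\pi}E_{n})^{*}$ for (a)$\Rightarrow$(b), and the degenerate choice $E_{2}=\cdots=E_{n}=\mathbb{K}$ for (c)$\Rightarrow$(a). The bookkeeping you flag as the main obstacle is precisely what the paper's proof carries out explicitly, namely showing that the linearization $B_{L}\colon E\widehat{\otimes}_{\pi}\mathbb{K}\widehat{\otimes}_{\pi}\cdots\widehat{\otimes}_{\pi}\mathbb{K}\to E$ is an isometric isomorphism (via $B_{L}^{*}=\ell\circ v$) and that rank-one operators correspond correctly, i.e.\ $(v(x^{*})_{L}\otimes y)\circ(B_{L})^{-1}=x^{*}\otimes y$.
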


\begin{proof} In this proof the components of $\cal I$ are endowed with the norm $\|\cdot\|_{\cal I}$ and the components of ${\cal I}\circ {\cal L}$ with $\|\cdot\|_{{\cal I}\circ {\cal L}}$.\\
(a) $\Longrightarrow$ (b) Let $n \in \mathbb{N}$ and suppose that $\mathcal{L}(E_{1},\ldots,E_{n})$ has $P_{1}$ and $F$ has $P_{2}$. By \cite[Propositions 3.2 and 3.7]{prims} we know that the linearization operator
$$L\colon   \mathcal{I \circ L}(E_{1},\ldots,E_{n};F) \longrightarrow \mathcal{I}(E_{1} \hat{\otimes}_{\pi} \cdots \hat{\otimes}_{\pi} E_{n} ;F)~,~ L(A) = A_L,$$
is an isometric isomorphism. Hence, the adjoint of its inverse
$$(L^{-1})^*\colon(\mathcal{I \circ L}(E_{1},\ldots,E_{n};F))^* \longrightarrow \mathcal{I}(E_{1} \hat{\otimes}_{\pi} \cdots \hat{\otimes}_{\pi} E_{n} ;F)^*$$
is also an isometric isomorphism. Since the linearization operator $\ell \colon \mathcal{L}(E_{1},\ldots,E_{n}) \longrightarrow (E_{1} \hat{\otimes}_{\pi} \cdots \hat{\otimes}_{\pi} E_{n})^*$ is an isometric isomorphism \cite{ryan}, we have that: (i) $(E_{1} \hat{\otimes}_{\pi} \cdots \hat{\otimes}_{\pi} E_{n})^*$ has $P_1$, so the linear Borel transform ${\cal B} \colon \mathcal{I}(E_{1} \hat{\otimes}_{\pi} \cdots \hat{\otimes}_{\pi} E_{n};F)^*
 \longrightarrow \mathcal{L}_{\alpha}((E_{1} \hat{\otimes}_{\pi} \cdots \hat{\otimes}_{\pi} E_{n})^*;F^*)$ is an isometric isomorphism by asumption; (ii) the composition operator
$$ J_{\ell} \colon \mathcal{L}_{\alpha}((E_{1} \hat{\otimes}_{\pi} \cdots \hat{\otimes}_{\pi} E_{n})^*;F^*) \longrightarrow \mathcal{L}_{\alpha}(\mathcal{L}(E_{1},\ldots,E_{n});F^*))~,~
 J_{\ell}(u) = u \circ \ell,
 $$
is an isometric isomorphism by Lemma \ref{ammel}. It is enough to prove that the diagram %Bearing in mind the following chain of isometric isomorphisms
\begin{equation*}
\begin{gathered}
\xymatrix@C2pt@R15pt{
           \mathcal{I}(E_{1} \hat{\otimes}_{\pi} \cdots \hat{\otimes}_{\pi} E_{n};F)^* \ar[rrrrrrrrrr]^*{\cal B} & & & & &  & & & & & \mathcal{L}_{\alpha}((E_{1} \hat{\otimes}_{\pi} \cdots \hat{\otimes}_{\pi} E_{n})^*;F^*) \ar@/^/[dd]^*{J_\ell}&\\
&  &  &  &  & & & & &  & & & & &  &\\
         (\mathcal{I \circ L}(E_{1},\ldots,E_{n};F))^*\ar@/_/[uu]^*{(L^{-1})^*}\ar[rrrrrrrrrr]^*{{\cal B}_n}  & & & & & & & & & & \mathcal{L}_{\alpha}(\mathcal{L}(E_{1},\ldots,E_{n});F^*)
}
\end{gathered}
\end{equation*}
is commutative:
%$$(\mathcal{I \circ L})(E_{1},\ldots,E_{n};F)^* \stackrel{(L^{-1})^*}{\longrightarrow} \mathcal{I}(E_{1} \hat{\otimes}_{\pi} \cdots \hat{\otimes}_{\pi} E_{n};F)^*
% \stackrel{{\cal B}}{\longrightarrow} \mathcal{L}_{\alpha}((E_{1} \hat{\otimes}_{\pi} \cdots \hat{\otimes}_{\pi} E_{n})^*;F^*) $$ $$\stackrel{J_\ell}{\longrightarrow} \mathcal{L}_{\alpha}(\mathcal{L}(E_{1},\ldots,E_{n});F^*),$$
%all that is left to do is to prove that $J_\ell \circ {\cal B} \circ (L^{-1})^* = {\cal B}_n$:
for $\phi \in (\mathcal{I} \circ \mathcal{L})(E_{1},\ldots,E_{n};F)^*$, $A \in \mathcal{L}(E_{1},\ldots,E_{n})$ and $y \in F$,
\begin{align*}
\left(J_{\ell} \circ {\cal B} \circ  (L^{-1})^*\right)&(\phi)(A)(y) = \left((J_{\ell}({\cal B}((L^{-1})^*(\phi))))(A)\right)(y) = \left(({\cal B}((L^{-1})^*(\phi)))(A_{L})\right)(y) \\& = (L^{-1})^*(\phi)(A_{L} \otimes y) = \phi((L^{-1})(A_{L} \otimes y)) = \phi(A \otimes y) = {\cal B}_n(\phi)(A)(y),
\end{align*}
because $L(A \otimes y) = (A \otimes y)_L = A_L \otimes y$.\\
(b) $\Longrightarrow$ (c) is obvious. Let us prove (c) $\Longrightarrow$ (a). Suppose that $E^*$ has $P_{1}$ and $F$ has $P_{2}$. A routine computation shows that the operator
$$v \colon E^* \longrightarrow {\cal L}(E, \mathbb{K}, \stackrel{(n-1)}\ldots,{\mathbb{K}})~,~v(x^*)(x, \lambda_1, \ldots, \lambda_{n-1}) =  \lambda_1 \cdots \lambda_{n-1}x^*(x), $$
is an isometric isomorphism, therefore ${\cal L}(E, \mathbb{K}, \stackrel{(n-1)}\ldots,{\mathbb{K}})$ has $P_1$ and the hyper Borel transform ${\cal B}_n \colon (\mathcal{I \circ L}(E, \mathbb{K}, \stackrel{(n-1)}\ldots,{\mathbb{K}};F))^* \longrightarrow \mathcal{L}_{\alpha}({\cal L}(E, \mathbb{K}, \stackrel{(n-1)}\ldots,{\mathbb{K}});F^*)$ is an isometric isomorphism by assumption. Furthermore, the composition operator
$$J_{v}\colon \mathcal{L}_{\alpha}({\cal L}(E, \mathbb{K}, \stackrel{(n-1)}\ldots,{\mathbb{K}});F^*) \longrightarrow \mathcal{L}_{\alpha}(E^*;F^*)~,~u \mapsto u \circ v, $$
 is an isometric isomorphism by Lemma \ref{ammel}. The linearization operator $$\ell \colon {\cal L}(E, \mathbb{K}, \stackrel{(n-1)}\ldots,{\mathbb{K}}) \longrightarrow (E \hat{\otimes}_{\pi} \mathbb{K} \hat{\otimes}_{\pi} \stackrel{(n-1)}{\cdots} \hat{\otimes}_{\pi} \mathbb{K})^*$$ is an isometric isomorphism, so is the composition $\ell \circ v \colon E^* \longrightarrow (E \hat{\otimes}_{\pi} \mathbb{K} \hat{\otimes}_{\pi} \stackrel{(n-1)}{\cdots} \hat{\otimes}_{\pi} \mathbb{K})^*$. Let $B_L \colon E \hat{\otimes}_{\pi} \mathbb{K} \hat{\otimes}_{\pi} \stackrel{(n-1)}{\cdots} \hat{\otimes}_{\pi} \mathbb{K} \longrightarrow E$ be the linearization of the continuous multilinear operator
 $$B \colon E \times \mathbb{K}^{n-1}\longrightarrow E~,~B(x, \lambda_1, \ldots, \lambda_{n-1}) = \lambda_1 \cdots \lambda_{n-1}x. $$
 For $x^* \in E^*$ and $x \otimes \lambda_1 \otimes \cdots \otimes \lambda_{n-1} \in E \hat{\otimes}_{\pi} \mathbb{K} \hat{\otimes}_{\pi} \stackrel{(n-1)}{\cdots} \hat{\otimes}_{\pi} \mathbb{K}$,
\begin{align*}B_L^*(x^*)(x \otimes \lambda_1 \otimes \cdots \otimes \lambda_{n-1})& = x^*(B(x, \lambda_1, \ldots, \lambda_{n-1}))  =\lambda_1 \cdots \lambda_{n-1} x^*(x) \\&= v(x^*)(x, \lambda_1, \ldots, \lambda_{n-1})= v(x^*)_L(x \otimes \lambda_1 \otimes \cdots \otimes\lambda_{n-1}) \\&=(\ell \circ v)(x^*)(x \otimes \lambda_1 \otimes \cdots \otimes \lambda_{n-1}).
\end{align*}
Since both $B_L^*$ and $\ell \circ v$ are bounded linear operators and the finite sums of elementary tensors are dense in $E \hat{\otimes}_{\pi} \mathbb{K} \hat{\otimes}_{\pi} \stackrel{(n-1)}{\cdots} \hat{\otimes}_{\pi} \mathbb{K}$, it follows that
$B_L^* = \ell \circ v$, from which we conclude that $B_L$ is an isometric isomorphism. Since ${\cal I}$ is a Banach operator ideal, the composition operator
$$J_{(B_L)^{-1}} \colon {\cal I}(E \hat{\otimes}_{\pi} \mathbb{K} \hat{\otimes}_{\pi} \stackrel{(n-1)}{\cdots} \hat{\otimes}_{\pi} \mathbb{K};F) \longrightarrow  {\cal I}(E;F) ~,~u \mapsto u \circ (B_L)^{-1},$$
is an isometric isomorphism too. Calling on \cite[Propositions 3.2 and 3.7]{prims} once again, the linearization operator  $L\colon   \mathcal{I \circ L}(E, \mathbb{K}, \stackrel{(n-1)}\ldots,{\mathbb{K}};F) \longrightarrow \mathcal{I}(E \hat{\otimes}_{\pi} \mathbb{K} \hat{\otimes}_{\pi} \stackrel{(n-1)}{\cdots} \hat{\otimes}_{\pi} \mathbb{K} ;F)$ is an isometric isomorphism, so are $J_{(B_L)^{-1}}\circ L \colon \mathcal{I \circ L}(E, \mathbb{K}, \stackrel{(n-1)}\ldots,{\mathbb{K}};F) \longrightarrow {\cal I}(E;F)$ and its adjoint $(J_{(B_L)^{-1}}\circ L)^* \colon {\cal I}(E;F)^* \longrightarrow (\mathcal{I \circ L}(E, \mathbb{K}, \stackrel{(n-1)}\ldots,{\mathbb{K}};F))^*$. We just have to check that the diagram
\begin{equation*}
\begin{gathered}
\xymatrix@C2pt@R15pt{
           (\mathcal{I \circ L}(E, \mathbb{K}, \stackrel{(n-1)}\ldots,{\mathbb{K}};F))^* \ar[rrrrrrrrrr]^*{{\cal B}_n} & & & & &  & & & & & \mathcal{L}_{\alpha}({\cal L}(E, \mathbb{K}, \stackrel{(n-1)}\ldots,{\mathbb{K}});F^*) \ar@/^/[dd]^*{J_{v}}&\\
&  &  &  &  & & & & &  & & & & &  &\\
        {\cal I}(E;F)^*\ar@/_/[uu]^*{(J_{(B_L)^{-1}}\circ L)^*}\ar[rrrrrrrrrr]^*{\cal B}  & & & & & & & & & & \mathcal{L}_{\alpha}(E^*;F^*)
}
\end{gathered}
\end{equation*}
is commutative. First note that given $x^* \in E^*$ and $y \in F$,
\begin{align*}((v(x^*)_L\otimes y)\circ (B_L)^{-1})(x)&= (v(x^*)_L\otimes y)((B_L)^{-1}(x))  = v(x^*)_L((B_L)^{-1}(x))y\\
&= v(x^*)_L(x\otimes1 \otimes \cdots \otimes 1)y = v(x^*)(x,1, \ldots, 1)y = x^*(x)y
\end{align*}
for every $x \in E$
because $B_L(x\otimes1 \otimes \cdots \otimes 1) = B(x,1,\ldots, 1) = x$. This proves that $(v(x^*)_L\otimes y)\circ (B_L)^{-1} = x^* \otimes y$. So, for $\phi \in {\cal I}(E;F)^*$, $x^* \in E^*$ and $y \in F$,
\begin{align*} (J_{v} \circ {\cal B}_n & \circ (J_{(B_L)^{-1}}\circ L)^*)(\phi)(x^*)(y) = (J_{v}({\cal B}_n ((J_{(B_L)^{-1}}\circ L)^*(\phi))))(x^*)(y)\\
& =  (({\cal B}_n ((J_{(B_L)^{-1}}\circ L)^*(\phi)))\circ v)(x^*)(y) = {\cal B}_n ((J_{(B_L)^{-1}}\circ L)^*(\phi))(v(x^*))(y)\\
& = (J_{(B_L)^{-1}}\circ L)^*(\phi)(v(x^*)\otimes y) = \phi((J_{(B_L)^{-1}}\circ L)(v(x^*)\otimes y))\\
&= \phi(J_{(B_L)^{-1}}(L(v(x^*)\otimes y))) = \phi(J_{(B_L)^{-1}}((v(x^*)\otimes y)_L))\\
& = \phi(J_{(B_L)^{-1}}(v(x^*)_L\otimes y)) =  \phi((v(x^*)_L\otimes y)\circ (B_L)^{-1})= \phi(x^* \otimes y) = {\cal B}(\phi)(x^*)(y).
\end{align*}
\end{proof}

\section{Applications}
In this section we give some concrete applications of Theorem \ref{tthh}.

 To simplify the notation and the terminology, the symbol $ [{\cal I}(E,F)]^* \stackrel{{\cal B}}{=} {\cal L}_\alpha(E^*;F^*)$ means that linear Borel transform ${\cal B} \colon ({\cal I}(E,F);\|\cdot\|_{\cal I})^* \longrightarrow {\cal L}_\alpha(E^*;F^*)$ is an isometric isomorphism; and the symbol $[{\cal H}(E_{1},\ldots,E_{n};F)]^* \stackrel{{\cal B}_n}{=} \mathcal{L}_{\alpha}(\mathcal{L}(E_{1},\ldots,E_{n});F^*)$  means that the hyper-Borel transform ${\cal B}_n \colon({\cal H}(E_{1},\ldots,E_{n};F), \|\cdot\|_{\cal H})^* \longrightarrow \mathcal{L}_{\alpha}(\mathcal{L}(E_{1},\ldots,E_{n});F^*)$ is an isometric isomorphism.

\subsection{Approximable and compact operators}
In this subsection, and only here, the spaces of multilinear operators are considered with the usual operator norm.

We shall use the following symbols:\\
$\bullet$ $\overline{\cal F}$ = closed ideal of linear operators that can be approximated in the usual norm by finite rank operators; \\
$\bullet$ $\cal K$ = closed  ideal of compact linear operators;\\
$\bullet$ $({\cal J}, \|\cdot\|_{\cal J})$ = Banach ideal of integral linear operators;\\
$\bullet$ $\overline{{\cal L}_{\cal F}}$ = closed hyper-ideal of multilinear operators that can be approximated in the usual norm by finite rank operators;\\
$\bullet$ ${\cal L}_{\cal K}$ = closed hyper-ideal of compact multilinear operators.

%The reader should bear in mind that the symbol ${\cal L}_{\cal A}$ is sometimes used to denote the (smaller) class of multilinear operators that can be approximated by operators of finite type.

Our first application represents linear functionals on spaces of approximable multilinear operators as integral linear operators.

\begin{theorem}\label{thapp} $[{\overline{{\cal L}_{\cal F}}}(E_{1},\ldots,E_{n};F)]^* \stackrel{{\cal B}_n}{=} {\cal J}(\mathcal{L}(E_{1},\ldots,E_{n});F^*)$ for all Banach spaces $E_1, \ldots,$ $E_n$ and $ F$.
\end{theorem}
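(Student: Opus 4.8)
The plan is to derive this from the linear case through Theorem \ref{tthh}. I would apply that theorem with the Banach operator ideal ${\cal I} = \overline{\cal F}$, the left semi-operator ideal $\alpha = {\cal J}$ (the integral operators form a Banach operator ideal, hence a left semi-operator ideal by Example \ref{exclass}(a)), and with $P_1$ and $P_2$ both taken to be the trivial geometric property shared by all Banach spaces. With these choices, condition (a) of Theorem \ref{tthh} reads: for all Banach spaces $E$ and $F$, the linear Borel transform ${\cal B}\colon (\overline{\cal F}(E,F))^* \to {\cal J}(E^*;F^*)$ is an isometric isomorphism; and condition (b) is the desired multilinear statement, provided $\overline{{\cal L}_{\cal F}}$ is identified with the composition hyper-ideal $\overline{\cal F}\circ{\cal L}$. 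So the proof splits into verifying the linear base case and making this identification.

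First I would prove the linear representation $[\overline{\cal F}(E,F)]^* \stackrel{{\cal B}}{=} {\cal J}(E^*;F^*)$. The point is that the canonical map $E^* \otimes F \to {\cal L}(E;F)$, which sends a tensor to the associated finite rank operator, takes the injective tensor norm to the operator norm (a Goldstine argument identifies the two suprema), so upon completion it identifies $E^* \widehat{\otimes}_{\varepsilon} F$ isometrically with $\overline{\cal F}(E,F)$. Therefore $[\overline{\cal F}(E,F)]^* = (E^* \widehat{\otimes}_{\varepsilon} F)^*$, and the classical duality between the injective tensor product and the space of integral operators (see \cite{df, ryan}) gives $(E^* \widehat{\otimes}_{\varepsilon} F)^* = {\cal J}(E^*;F^*)$ isometrically. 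Tracking the identifications, the functional $\phi$ corresponds to the integral operator $u\colon E^* \to F^*$ given by $u(x^*)(y) = \phi(x^* \otimes y)$, that is, $u = {\cal B}(\phi)$; this is the crucial check that the isomorphism is implemented by the Borel transform. Note that this argument uses no approximation property, which is exactly why the final statement is valid for all Banach spaces.

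Next I would identify $\overline{{\cal L}_{\cal F}}(E_1, \ldots, E_n;F) = \overline{\cal F}\circ{\cal L}(E_1, \ldots, E_n;F)$ isometrically. By the universal property of the projective tensor product, the linearization $A \mapsto A_L$ is an isometric isomorphism from ${\cal L}(E_1, \ldots, E_n;F)$ onto ${\cal L}(E_1 \widehat{\otimes}_{\pi} \cdots \widehat{\otimes}_{\pi} E_n;F)$, and it carries the finite rank multilinear operators bijectively onto the finite rank linear operators, since a scalar multilinear form corresponds exactly to a functional on the projective tensor product. Being an isometry for the operator norms, it therefore maps $\overline{{\cal L}_{\cal F}}(E_1, \ldots, E_n;F)$, the closure of the finite rank multilinear operators, onto $\overline{\cal F}(E_1 \widehat{\otimes}_{\pi} \cdots \widehat{\otimes}_{\pi} E_n;F)$. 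On the other hand, by \cite[Propositions 3.2 and 3.7]{prims} linearization is an isometric isomorphism from $\overline{\cal F}\circ{\cal L}(E_1, \ldots, E_n;F)$ onto the same space $\overline{\cal F}(E_1 \widehat{\otimes}_{\pi} \cdots \widehat{\otimes}_{\pi} E_n;F)$. Comparing, $\overline{{\cal L}_{\cal F}} = \overline{\cal F}\circ{\cal L}$ isometrically, and both norms coincide with the usual operator norm used throughout this subsection.

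With the two ingredients in hand, I would invoke the implication (a) $\Rightarrow$ (b) of Theorem \ref{tthh}: the verified linear representation yields that ${\cal B}_n \colon (\overline{\cal F}\circ{\cal L}(E_1, \ldots, E_n;F))^* \to {\cal J}({\cal L}(E_1, \ldots, E_n);F^*)$ is an isometric isomorphism for every $n$ and all $E_1, \ldots, E_n, F$, which is the theorem after the identification above. I expect the main obstacle to lie in the linear base case -- not in its truth, which is classical, but in pinning down that the canonical injective-tensor/integral-operator duality is isometric and is realized precisely by ${\cal B}$, since Theorem \ref{tthh} transports only those representations effected by the Borel transform. The identification $\overline{{\cal L}_{\cal F}} = \overline{\cal F}\circ{\cal L}$ is comparatively routine, being a direct consequence of the linearization machinery of \cite{prims}.
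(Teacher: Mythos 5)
Your proposal is correct and follows essentially the same route as the paper: establish the linear duality $[\overline{\cal F}(E;F)]^* \stackrel{{\cal B}}{=} {\cal J}(E^*;F^*)$ via the isometric identification $E^*\widehat{\otimes}_\varepsilon F \cong \overline{\cal F}(E;F)$ and the injective-tensor/integral-operator duality, verify that this identification is implemented by the Borel transform, and then transfer to the multilinear setting through Theorem \ref{tthh} together with the isometric equality $\overline{{\cal L}_{\cal F}} = \overline{\cal F}\circ{\cal L}$. The only difference is one of self-containedness: where the paper cites \cite[4.2(1) and Proposition 10.1]{df} for the linear step and \cite[Theorem 2.2]{leticia} for the equality $\overline{{\cal L}_{\cal F}} = \overline{\cal F}\circ{\cal L}$, you supply the (correct) Goldstine argument and the direct linearization argument, respectively.
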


\begin{proof} The symbol $E^*\widehat{\otimes}_\varepsilon F$ stands for the completed injective tensor product of $E^*$ and $F$. By \cite[4.2(1)]{df} we know that the operator
 $$T \colon E^*\widehat{\otimes}_\varepsilon F\longrightarrow \overline{\cal F}(E;F)~,~T(x^*\otimes y)(x) = x^*(x)y, $$
is an isometric isomorphism, so is its adjoint
$$T^* \colon \overline{\cal F}(E;F)^* \longrightarrow  (E^*\widehat{\otimes}_\varepsilon F)^*. $$
And \cite[Proposition 10.1]{df} gives that the operator
$$U \colon (E^*\widehat{\otimes}_\varepsilon F)^* \longrightarrow {\cal J}(E^*,F^*)~,~ U(\psi)(x^*)(y) = \psi(x^* \otimes y), $$
%$$V \colon {\cal L}(E^*,F;\mathbb{K}) \longrightarrow {\cal L}(E^*;F^*)~,~V(A)(\varphi)(y) = A(\varphi,y), $$
is also an isometric isomorphism. % (actually $U$ is the inverse of the linearization operator $A \mapsto A_L)$.
Let us see that ${\cal B} = U\circ T^*$: for $\phi \in \overline{\cal F}(E,F)^*$, $x^*\in E^*$ and $y \in F$,
$$( U\circ T^*)(\phi)(x^*)(y) = U(T^*(\phi))(x^*)(y) = T^*(\phi)(x^*\otimes y)= \phi(x^*(x)y)= {\cal B}(\phi)(x^*)(y).  $$
This shows that $[\overline{\cal F}(E;F)]^* \stackrel{{\cal B}}{=} {\cal J}(E^*,F^*)$ for all Banach spaces $E$ and $F$. Since $\cal J$ is a Banach operator ideal, by Theorem \ref{tthh} we have  $[\overline{\cal F}\circ {\cal L}(E_{1},\ldots,E_{n};F)]^* \stackrel{{\cal B}_n}{=} \mathcal{J}(\mathcal{L}(E_{1},\ldots,E_{n});F^*)$ for all $E_1, \ldots, E_n, F$. The proof is complete because $\overline{{\cal L}_{\cal F}} = \overline{\cal F}\circ {\cal L}$ isometrically. The polynomial case of this equality can be found in \cite[Theorem 2.2]{leticia}, the multilinear case is analogous.
\end{proof}

Denote by $\overline{{\cal L}_f}$ the closed ideal of multilinear operators that can be approximated in the usual norm by finite type operators and by ${\cal L}_{\cal J}$ the Banach ideal of integral multilinear operators \cite{ryan}. In \cite{alencar} Alencar proves that $\overline{{\cal L}_f}(E_1, \ldots, E_n;F)^* = {\cal L}_{\cal J}(E_1^*, \ldots, E_n^*;F^*)$ if $E_1, \ldots, E_n, F$ are refexive. Besided of representing linear functionals as linear operators rather than as multilinear operators, our representation in Theorem \ref{thapp} works for all Banach spaces.

In the presence of the approximation property we can describe the linear functionals on spaces of compact multilinear operators as integral linear operators. We are not aware of any other representation theorem for linear functionals on spaces of compact multilinear operators.

\begin{theorem}  If ${\cal L}(E_1, \ldots, E_n)$ or $F^*$ has the approximation property, then $$[{\cal L}_{\cal K}(E_{1},\ldots,E_{n};F)]^* \stackrel{{\cal B}_n}{=} {\cal J}(\mathcal{L}(E_{1},\ldots,E_{n});F^*).$$ %for all Banach spaces $E_1, \ldots E_n, F$.
\end{theorem}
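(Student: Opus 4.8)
The plan is to run the same machine used in Theorem \ref{thapp}: realize the compact multilinear operators as a composition hyper-ideal $\mathcal{K}\circ\mathcal{L}$ and then invoke Theorem \ref{tthh}, with the linear operator ideal $\mathcal{I}=\mathcal{K}$ and the left semi-operator ideal $\alpha=\mathcal{J}$ (which qualifies by Example \ref{exclass}(a), being a Banach operator ideal). First I would record the isometric identity $\mathcal{L}_{\mathcal{K}}=\mathcal{K}\circ\mathcal{L}$, considered with the operator norm. Indeed, a multilinear operator $A$ is compact if and only if its linearization $A_L$ is compact; writing $A=A_L\circ\sigma$, where $\sigma\colon E_1\times\cdots\times E_n\longrightarrow E_{1}\widehat{\otimes}_{\pi}\cdots\widehat{\otimes}_{\pi}E_{n}$ is the canonical $n$-linear map with $\|\sigma\|=1$ and $\|A_L\|=\|A\|$, one gets $A\in\mathcal{K}\circ\mathcal{L}$ and $\|A\|_{\mathcal{K}\circ\mathcal{L}}=\|A\|$; the reverse containment is immediate since $u\circ B$ is compact whenever $u$ is compact. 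This reduces the whole statement to the linear Borel representation for compact operators.

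Second, I would establish the purely linear fact $[\mathcal{K}(E;F)]^*\stackrel{\mathcal{B}}{=}\mathcal{J}(E^*,F^*)$ under the hypothesis that $E^*$ or $F$ has the approximation property. The classical input here is Grothendieck's theorem: if $E^*$ or $F$ has the approximation property, then $\mathcal{K}(E;F)=\overline{\mathcal{F}}(E;F)$, and this equality is isometric because both spaces carry the operator norm. Combined with the linear representation already obtained inside the proof of Theorem \ref{thapp}, namely $[\overline{\mathcal{F}}(E;F)]^*\stackrel{\mathcal{B}}{=}\mathcal{J}(E^*,F^*)$ for \emph{all} Banach spaces $E$ and $F$, this transfers verbatim to $\mathcal{K}$, giving the desired linear representation exactly when $E^*$ or $F$ has the approximation property.

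Third, I would feed this into Theorem \ref{tthh}. Since the stated hypothesis is a disjunction, I apply the theorem twice with different choices of the geometric properties, recalling that $(E_{1}\widehat{\otimes}_{\pi}\cdots\widehat{\otimes}_{\pi}E_{n})^*\cong\mathcal{L}(E_{1},\ldots,E_{n})$ isometrically, so that "$\mathcal{L}(E_{1},\ldots,E_{n})$ has the approximation property'' is precisely "$E^*$ has the approximation property''. Taking $P_1=$ "has the approximation property'' and $P_2=$ the trivial property (satisfied by every Banach space), condition (a) of Theorem \ref{tthh} holds by the previous paragraph, and the implication (a)$\Rightarrow$(b) yields the conclusion whenever $\mathcal{L}(E_{1},\ldots,E_{n})$ has the approximation property. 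Taking instead $P_1=$ the trivial property and $P_2=$ "$F^*$ has the approximation property'', I use that the approximation property of $F^*$ forces the approximation property of $F$, hence again $\mathcal{K}(E;F)=\overline{\mathcal{F}}(E;F)$ and condition (a) holds; this covers the case where $F^*$ has the approximation property. The two applications together give $[\mathcal{L}_{\mathcal{K}}(E_{1},\ldots,E_{n};F)]^*\stackrel{\mathcal{B}_n}{=}\mathcal{J}(\mathcal{L}(E_{1},\ldots,E_{n});F^*)$.

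The only nonformal point, and hence the place I expect to spend care rather than effort, is the interface between Grothendieck's compact-equals-approximable theorem and the isometric bookkeeping: one must verify that $\mathcal{K}=\overline{\mathcal{F}}$ is an \emph{isometric} equality (it is, as both are normed by the operator norm) so that the isometric identity $[\overline{\mathcal{F}}]^*=\mathcal{J}$ passes to $\mathcal{K}$ without any loss, and one must match the disjunctive approximation-property hypothesis correctly to the conjunctive format "$\mathcal{L}(E_{1},\ldots,E_{n})$ has $P_1$ and $F$ has $P_2$'' of Theorem \ref{tthh} by splitting into the two cases above. Everything else is the machinery already assembled in Sections 2 and 3.
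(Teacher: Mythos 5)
Your proposal is correct and follows essentially the same route as the paper's proof: reduce compacts to approximables via Grothendieck's theorem, reuse the linear representation $[\overline{\mathcal{F}}(E;F)]^* \stackrel{\mathcal{B}}{=} \mathcal{J}(E^*,F^*)$ established in the proof of Theorem \ref{thapp}, transfer it through Theorem \ref{tthh}, and conclude with the isometric identity $\mathcal{L}_{\mathcal{K}} = \mathcal{K}\circ\mathcal{L}$ (which the paper cites from Ryan's thesis and Mujica's work rather than reproving via linearization, as you sketch). Your explicit two-case choice of the geometric properties $P_1,P_2$ to fit the disjunctive hypothesis into the conjunctive format of Theorem \ref{tthh}, together with the observation that the approximation property of $F^*$ passes down to $F$, spells out bookkeeping the paper leaves implicit, but it is exactly the intended reading.
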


\begin{proof} If $E^*$ of $F$ has the approximation property, then ${\cal K}(E;F) = \overline{\cal F}(E;F)$ \cite[1.e.4, 1.e.5]{lindenstrauss}. The first part of the proof above shows that $[{\cal K}(E;F)]^* \stackrel{{\cal B}}{=} {\cal J}(E^*,F^*)$ whenever $E^*$ of $F$ has the approximation property. Since $\cal J$ is a Banach operator ideal, by Theorem \ref{tthh} we have  $[{\cal K}\circ {\cal L}(E_{1},\ldots,E_{n};F)]^* \stackrel{{\cal B}_n}{=} \mathcal{J}(\mathcal{L}(E_{1},\ldots,E_{n});F^*)$ if ${\cal L}(E_1, \ldots, E_n)$ or $F^*$ has the approximation property. The isometric equality ${\cal L}_{\cal K} = {\cal K}\circ {\cal L}$ is well known (see \cite[Lemma 4.1]{teseryan} or \cite[Proposition 3.4(a)]{mujicatams}), so the proof is complete.
\end{proof}

\subsection{Hyper-nuclear and hyper-$(s,r)$-nuclear operators}

As announced in Section 2, we shall prove in this section that $[{\cal HN}(E_1, \ldots, E_n;F)^*] \stackrel{{\cal B}_n}{=}\mathcal{L}(\mathcal{L}(E_{1},\ldots,E_{n});F^*)$. To do so using Theorem \ref{tthh}, we have to check that ${\cal L}_{\cal HN}$ is a composition hyper-ideal. We shall prove it in a much more general setting.

We use the convention $1/\infty = 0$. Let $s \in [1,\infty)$ and $r \in [1,\infty]$ be such that $1 \leq \frac{1}{s} + \frac{1}{r}$. According to \cite{ewertonlaa}, an $n$-linear operator $A \in \mathcal{L}(E_{1},\ldots,E_{n};F)$ is  {\it hyper-$(s,r)$-nuclear} if there are sequences $(\lambda_{j})_{j=1}^{\infty} \in \ell_{s}$, $(T_{j})_{j=1}^{\infty} \in \ell_{r}^{w}(\mathcal{L}(E_{1},\ldots,E_{n}))$ and $(y_{j})_{j=1}^{\infty} \in \ell_{\infty}(F)$ such that
\begin{equation}\label{rep hiper-nuclear}
A(x_{1},\ldots,x_{n}) = \sum\limits_{j=1}^{\infty} \lambda_{j} T_{j} \otimes y_{j} (x_{1},\ldots,x_{n}),
\end{equation}
for every $(x_{1},\ldots,x_{n}) \in E_{1} \times \cdots \times E_{n}.$ The space of all these operators is denoted by  ${\mathcal{HN}_{s,r}}(E_{1},\ldots,E_{n};F)$ and its norm is defined by
\begin{center}
$\|A\|_{\mathcal{HN}_{s,r}} = \inf\{ \|(\lambda_{j})_{j=1}^{\infty} \|_{s} \cdot \|(T_{j})_{j=1}^{\infty} \|_{w,r} \cdot \|(y_{j})_{j=1}^{\infty} \|_{\infty} \},$
\end{center}
where the infimum is taken over all hyper-$(s,r)$-nuclear representations of $A$ as in  (\ref{rep hiper-nuclear}).

Taking $s=1$ and $r = \infty$ and recalling that $\ell_\infty^w(E) = \ell_\infty(E)$, we recover the class of hyper-nuclear operators considered in Section 2, that is, ${\cal HN} = \mathcal{HN}_{1,\infty}$.

In \cite{ewertonlaa} it is proved that $\mathcal{HN}_{s,r}$ is a Banach hyper-ideal of multilinear operators. In the linear case $n = 1$ we denote the Banach operator ideal of $(s,r)$-nuclear linear operators simply by $\mathcal{N}_{s,r}$. In the same fashion, the Banach ideal of nuclear operators $\mathcal{N}_{1,\infty}$ is denoted by $\cal N$.

\begin{lemma}\label{lemaHN} $\mathcal{HN}_{s,r} = \mathcal{N}_{s,r} \circ {\cal L}$ isometrically and, in particular, $\mathcal{HN} = \mathcal{N} \circ {\cal L}$ isometrically.
\end{lemma}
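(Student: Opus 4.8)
The plan is to pass everything through linearization and reduce the multilinear statement to the known linear theory of $(s,r)$-nuclear operators. Write $X = E_1 \widehat{\otimes}_\pi \cdots \widehat{\otimes}_\pi E_n$ and let $A_L \in \mathcal{L}(X;F)$ denote the linearization of $A \in \mathcal{L}(E_1,\ldots,E_n;F)$. Since $\mathcal{N}_{s,r}$ is a Banach operator ideal, \cite[Propositions 3.2 and 3.7]{prims} already give that the linearization map $L(A) = A_L$ is an isometric isomorphism of $\mathcal{N}_{s,r}\circ{\cal L}(E_1,\ldots,E_n;F)$ onto $\mathcal{N}_{s,r}(X;F)$. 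Thus it suffices to prove the single \emph{isometric} equivalence: $A$ is hyper-$(s,r)$-nuclear if and only if $A_L$ is $(s,r)$-nuclear, and in that case $\|A\|_{\mathcal{HN}_{s,r}} = \|A_L\|_{\mathcal{N}_{s,r}}$. Granting this, the preimage under $L$ of $\mathcal{N}_{s,r}(X;F)$ is on one hand $\mathcal{N}_{s,r}\circ{\cal L}$ and on the other hand $\mathcal{HN}_{s,r}$, with matching norms, which is exactly the assertion.

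The bridge between the two kinds of representation is the canonical isometric isomorphism $\ell\colon \mathcal{L}(E_1,\ldots,E_n)\longrightarrow X^*$, $\ell(T) = T_L$ (see \cite{ryan}). First I would record that $\ell$ preserves weak-$\ell_r$ norms: if $\Phi\colon Z\to W$ is an isometric isomorphism of Banach spaces then $\Phi^*$ maps $B_{W^*}$ onto $B_{Z^*}$, so for any sequence $(z_j)_j \subset Z$ the defining supremum gives $\|(\Phi z_j)_j\|_{w,r} = \|(z_j)_j\|_{w,r}$; applying this to $\ell$ yields $\|(T_j)_j\|_{w,r} = \|((T_j)_L)_j\|_{w,r}$ for every $(T_j)_j \subset \mathcal{L}(E_1,\ldots,E_n)$.

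Now I would match representations across $\ell$. Given a hyper-$(s,r)$-nuclear representation $A(x_1,\ldots,x_n) = \sum_j \lambda_j T_j(x_1,\ldots,x_n)y_j$, evaluation on elementary tensors gives $A_L(x_1\otimes\cdots\otimes x_n) = \sum_j \lambda_j (T_j)_L(x_1\otimes\cdots\otimes x_n)y_j$, and by density of elementary tensors and continuity this extends to $A_L(z) = \sum_j \lambda_j (T_j)_L(z)y_j$ for all $z\in X$, an $(s,r)$-nuclear representation of $A_L$ with functionals $((T_j)_L)_j \in \ell_r^w(X^*)$ of the same weak-$\ell_r$ norm; hence $\|A_L\|_{\mathcal{N}_{s,r}} \le \|A\|_{\mathcal{HN}_{s,r}}$. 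Conversely, every functional appearing in an $(s,r)$-nuclear representation of $A_L$ is of the form $(T_j)_L$ for a unique $T_j = \ell^{-1}(g_j^*)\in\mathcal{L}(E_1,\ldots,E_n)$, since $\ell$ is onto, so reversing the computation produces a hyper-$(s,r)$-nuclear representation of $A$ with the same controlling product $\|(\lambda_j)_j\|_s\cdot\|(T_j)_j\|_{w,r}\cdot\|(y_j)_j\|_\infty$, giving $\|A\|_{\mathcal{HN}_{s,r}} \le \|A_L\|_{\mathcal{N}_{s,r}}$. This is the isometric equivalence required, and the ``in particular'' statement $\mathcal{HN} = \mathcal{N}\circ{\cal L}$ is then the special case $s=1$, $r=\infty$, recalling $\ell_\infty^w(\cdot)=\ell_\infty(\cdot)$, $\mathcal{N}_{1,\infty}=\mathcal{N}$ and $\mathcal{HN}_{1,\infty}=\mathcal{HN}$.

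I expect the only genuinely delicate point to be the invariance of the weak-$\ell_r$ norm under $\ell$ together with the verification that the convergence of the defining series is not lost when passing between $E_1\times\cdots\times E_n$ and $X$; the latter is harmless, since convergence takes place in $F$ and is governed by the same scalar data $(\lambda_j)_j$, $(y_j)_j$ and the preserved weak-$\ell_r$ norm (the admissibility condition $1 \le \tfrac1s+\tfrac1r$ being inherited directly from \cite{ewertonlaa}). Everything else is a faithful transcription of the linear $(s,r)$-nuclear representation across the isometry $\ell$ and an appeal to the composition-ideal linearization theorem of \cite{prims}.
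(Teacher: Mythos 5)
Your proof is correct, and it differs from the paper's in a way worth noting. The two arguments share their core: transferring representations across the isometric isomorphism $\ell\colon \mathcal{L}(E_1,\ldots,E_n)\longrightarrow (E_1\widehat{\otimes}_\pi\cdots\widehat{\otimes}_\pi E_n)^*$, $T\mapsto T_L$, which preserves weak-$\ell_r$ norms, and then invoking \cite[Propositions 3.2 and 3.7]{prims}; this is exactly how the paper proves the inclusion $\mathcal{HN}_{s,r}\subseteq \mathcal{N}_{s,r}\circ\mathcal{L}$ with $\|A\|_{\mathcal{N}_{s,r}\circ\mathcal{L}}\leq\|A\|_{\mathcal{HN}_{s,r}}$. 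The divergence is in the reverse inclusion: the paper proves $\mathcal{N}_{s,r}\circ\mathcal{L}\subseteq\mathcal{HN}_{s,r}$ directly, with no linearization at all, by taking an arbitrary factorization $A=u\circ B$ through an arbitrary Banach space $G$, an $(s,r)$-nuclear representation $u=\sum_j\lambda_j w_j^*\otimes y_j$, and pushing the functionals forward by $B^*$, so that $A=\sum_j\lambda_j(w_j^*\circ B)\otimes y_j$ is a hyper-$(s,r)$-nuclear representation (using $\|(w_j^*\circ B)_j\|_{w,r}\leq\|B\|\cdot\|(w_j^*)_j\|_{w,r}$); you instead route this inclusion through the tensor product as well, using the full strength of the prims isometry together with the surjectivity of $\ell$ to pull an $(s,r)$-nuclear representation of $A_L$ back to a hyper-representation of $A$. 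Your version is more symmetric and economical: a single bijective correspondence of representations yields $\|A\|_{\mathcal{HN}_{s,r}}=\|A_L\|_{\mathcal{N}_{s,r}}$ and prims does the rest, whereas the paper's hands-on argument exhibits explicitly how any factorization produces a hyper-nuclear representation and needs prims in only one direction. One point you should make explicit: the ``density of elementary tensors plus continuity'' step requires the partial sums $\sum_{j=1}^{m}\lambda_j(T_j)_L\otimes y_j$ to converge in operator norm (a H\"older estimate resting on $1\leq\frac{1}{s}+\frac{1}{r}$), so that the limit is a bounded operator agreeing with $A_L$ on a dense subspace; the paper secures this by first proving convergence of the partial sums in $\|\cdot\|_{\mathcal{HN}_{s,r}}$ and then applying the continuous linearization map. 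You flag this yourself and the estimate is standard, so it is a presentational omission rather than a gap.
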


\begin{proof} Given Banach spaces $E_1, \ldots, E_n, F$, an operator $A \in \mathcal{N}_{s,r} \circ \mathcal{L}(E_{1},\ldots,E_{n};F)$ and $\varepsilon > 0$, there are a Banach space $G$  and operators $B \in \mathcal{L}(E_{1},\ldots,E_{n};G)$ and $u \in \mathcal{N}_{s,r}(G;F)$ such that $A = u \circ B$. We can take sequences $(\lambda_{j})_{j=1}^{\infty} \in \ell_{s}$, $(y_{j})_{j=1}^{\infty} \in \ell_{\infty}(F)$ and $(w^*_{j})_{j=1}^{\infty} \in \ell_{r}^{w}(G^*)$ such that $u = \sum\limits_{j=1}^{\infty} \lambda_{j} w^*_{j} \otimes y_{j}$ and
\begin{equation}
\sum\limits_{j=1}^{\infty} \|(\lambda_j)_{j=1}^\infty\|_{s} \cdot \|(w^*_{j})_{j=1}^\infty\|_{w,r} \cdot \|(y_{j})_{j=1}^\infty\|_{\infty} \leq (1 + \varepsilon)\|u\|_{\mathcal{N}_{s,r}}.
\end{equation}
For any $(x_{1},\ldots,x_{n}) \in E_{1}, \times \cdots \times E_{n}$,
$$A(x_{1},\ldots,x_{n}) = \sum\limits_{j=1}^{\infty} \lambda_{j}(w^*_{j} \circ B) \otimes y_{j}(x_{1},\ldots,x_{n}). $$
It is clear that
$$B^* \colon G^* \longrightarrow {\cal L}(E_1, \ldots, E_n)~,~B^*(w^*)(x_1, \ldots, x_n) = w^*(B(x_1, \ldots, x_n)), $$
is a bounded linear operator and $\|B^*\| = \|B\|$. Therefore, $(w^*_{j} \circ B)_{j=1}^{\infty} = (B^*(w^*_j))_{j=1}^\infty \in \ell_{r}^{w}(\mathcal{L}(E_{1},\ldots,E_{n}))$ and $\|(w^*_{j} \circ B)_{j=1}^{\infty}\|_{w,r} \leq \|B\| \cdot \|(w^*_{j})_{j=1}^{\infty}\|_{w,r}$ (see \cite[p.\,92]{df}). It follows that $A = \sum\limits_{j=1}^{\infty} \lambda_{j}(w^*_{j} \circ B) \otimes y_{j}$ is a hyper-$(s,r)$-nuclear representation for $A$ and
\begin{align*}
\|A\|_{\mathcal{HN}_{s,r}} \leq  \sum\limits_{j=1}^{\infty} \|(\lambda_{j})_{j=1}^{\infty}\|_{s} \cdot \|B\| \cdot \|(w^*_{j})_{j=1}^{\infty}\|_{w,r} \cdot \|(y_{j})_{j=1}^{\infty}\|_{\infty}
 \leq \|B\| \cdot (1 + \varepsilon) \|u\|_{\mathcal{N}_{s,r}}.
\end{align*}
We have $A \in {\mathcal{HN}_{s,r}}(E_{1},\ldots,E_{n};F)$ and, letting $\varepsilon \longrightarrow 0$, $\|A\|_{\mathcal{HN}_{s,r}} \leq \|B\| \cdot \|u\|_{\mathcal{N}_{s,r}}$. Taking now the infimum over all such factorizations $A = u \circ B$ we conclude that $\|A\|_{\mathcal{HN}_{s,r}} \leq \|A\|_{\mathcal{N}_{s,r} \circ \mathcal{L}}$.

Conversely, given $A \in \mathcal{HN}_{s,r}(E_{1},\ldots,E_{n};F)$ and $\varepsilon > 0$, there are sequences $(\lambda_{j})_{j=1}^{\infty} \in \ell_{s}$, $(B_{j})_{j=1}^{\infty} \in \ell_{r}^{w}(\mathcal{L}(E_{1},\ldots,E_{n}))$ and $(y_{j})_{j=1}^{\infty} \in \ell_{\infty}(F)$ such that $A = \sum\limits_{j=1}^{\infty} \lambda_{j} B_{j} \otimes y_{j}$ and
$$\sum\limits_{j=1}^{\infty} \|(\lambda_{j})_{j=1}^{\infty}\|_{s} \cdot \|(B_{j})_{j=1}^{\infty}\|_{w,r} \cdot \|(y_{j})_{j=1}^{\infty}\|_{\infty} \leq (1 + \varepsilon)\|A\|_{\mathcal{HN}_{s,r}}.$$
From
\begin{align*}
\lim_m\left\| A - \sum\limits_{j=1}^{m} \lambda_{j}B_{j} \otimes y_{j} \right\|_{\mathcal{HN}_{s,r}}& = \lim_m\left\| \sum\limits_{j=m+1}^{\infty} \lambda_{j}B_{j} \otimes y_{j} \right\|_{\mathcal{HN}_{s,r}} \\
& \leq \lim_m\|(\lambda_{j})_{j=m+1}^{\infty}\|_{s} \cdot \|(B_{j})_{j=m+1}^{\infty}\|_{w,r} \cdot \|(y_{j})_{j=m+1}^{\infty}\|_{\infty} \\
& \leq \|(B_{j})_{j=1}^{\infty}\|_{w,r} \cdot \|(y_{j})_{j=1}^{\infty}\|_{\infty}\cdot\lim_m\|(\lambda_{j})_{j=m+1}^{\infty}\|_{s} =0,
 %\left( \sum\limits_{j=m+1}^{\infty} |\lambda_{j}|^{s} \right)^{\frac{1}{s}} \cdot \sup\limits_{\varphi} \left( \sum\limits_{j=1}^{\infty} |\varphi(B_{j})|^{r}\right)^{\frac{1}{r}} \cdot \sup\limits_{j} \|y_{j}\|
\end{align*}
we have that $A = \sum\limits_{j=1}^{\infty} \lambda_{j} B_{j} \otimes y_{j}$ in the norm $\|\cdot\|_{\mathcal{HN}_{s,r}}$. Since $\|\cdot\| \leq \|\cdot\|_{\mathcal{HN}_{s,r}}$ because $\mathcal{HN}_{s,r}$ is a Banach hyper-ideal, the convergence also occurs in the usual operator norm on $\mathcal{L}(E_{1},\ldots,E_{n})$. Since the correspondence
$$B \in \mathcal{L}(E_{1},\ldots,E_{n};F) \mapsto B_{L} \in \mathcal{L}(E_{1} \widehat{\otimes}_{\pi} \cdots \widehat{\otimes}_{\pi} E_{n};F)$$
is an isomorphism, hence linear and continuous, it follows that
\begin{align}\label{yuyu}
A_{L} = \left( \sum_{j=1}^\infty\lambda_{j} B_{j} \otimes y_{j} \right)_{L} =  \sum_{j=1}^\infty(\lambda_{j} B_{j} \otimes y_{j})_L % \lim\limits_{m} \sum\limits_{j=1}^{m} \lambda_{j} (B_{j} \otimes y_{j})_{L}\\
%& = \lim\limits_{m} \sum\limits_{j=1}^{m} \lambda_{j} (B_{j})_{L} \otimes y_{j}
 = \sum\limits_{j=1}^{\infty} \lambda_{j} (B_{j})_{L} \otimes y_{j}
\end{align}
in $\mathcal{L}(E_{1} \widehat{\otimes}_{\pi} \cdots \widehat{\otimes}_{\pi} E_{n};F)$. Applying now the isometric isomorphism $B \in \mathcal{L}(E_{1},\ldots,E_{n}) \mapsto$  $ B_{L} \in (E_{1} \widehat{\otimes}_{\pi} \cdots \widehat{\otimes}_{\pi} E_{n})^*$ and calling on \cite[p.\,92]{df} once again we get  $$ ((B_{j})_{L})_{j=1}^\infty \in \ell_{r}^{w}((E_{1} \widehat{\otimes}_{\pi} \cdots \widehat{\otimes}_{\pi} E_{n})^*){ \rm ~ and~} \|((B_{j})_{L})_{j=1}^\infty\|_{w,r} = \|(B_{j})_{j}\|_{w,r}.$$
This proves that (\ref{yuyu}) is an $(s,r)$-nuclear representation for $A$ and
 \begin{align*}
\|A_{L}\|_{\mathcal{N}_{s,r}}& \leq \sum\limits_{j=1}^{\infty} \|(\lambda_{j})_{j=1}^{\infty}\|_{s} \cdot \|((B_{j})_{L})_{j=1}^{\infty}\|_{w,r} \cdot \|(y_{j})_{j=1}^{\infty}\|_{\infty}\\
& = \sum\limits_{j=1}^{\infty}  \|(\lambda_{j})_{j=1}^{\infty}\|_{s} \cdot \|(B_{j})_{j=1}^{\infty}\|_{w,r} \cdot \|(y_{j})_{j=1}^{\infty}\|_{\infty} \leq (1 + \varepsilon)\|A\|_{\mathcal{HN}_{s,r}}.
\end{align*}
So, $A_{L} \in \mathcal{N}_{s,r}(E_{1} \widehat{\otimes}_{\pi} \cdots \widehat{\otimes}_{\pi} E_{n};F)$ and, making $\varepsilon \longrightarrow 0$, $\|A_{L}\|_{\mathcal{N}_{s,r}} \leq \|A\|_{\mathcal{HN}_{s,r}}$. By \cite[Propositions 3.2 and 3.7]{prims} we conclude that $A \in \mathcal{N}_{s,r} \circ {\cal L}(E_{1},\ldots,E_{n};F)$ and $\|A\|_{\mathcal{N}_{s,r} \circ {\cal L}} \leq \|A\|_{\mathcal{HN}_{s,r}}$.
\end{proof}

%\begin{remark}\rm Sometimes the nuclear norm of a nuclear linear operator (case $n=s=1$ and $r = \infty$) is defined as an infimum of sums, instead of an infimum of products. If one is unease with this in the proof above, see in [DF p.\,291] that the nuclear norm can be see as an infimum of products, or, alternatively, see in [Ewerton laa??] that the hyper-nuclear norm of a hyper-nuclear multilinear operator can be seen as an infimum of sums.
%\end{remark}

The announced representation of linear functionals on spaces of hyper-nuclear multilinear operators as bounded linear operators can be proved now.

\begin{theorem}\label{repHN}  If ${\cal L}(E_1, \ldots, E_n)$ or $F^*$ has the approximation property, then $$[{\cal HN}(E_{1},\ldots,E_{n};F)]^* \stackrel{{\cal B}_n}{=} {\cal L}(\mathcal{L}(E_{1},\ldots,E_{n});F^*).$$
\end{theorem}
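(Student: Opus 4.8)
The plan is to read this off from Theorem \ref{tthh} by making the right choice of data. I would take the Banach operator ideal $\mathcal{I}$ to be the ideal $\mathcal{N}$ of nuclear operators and the left semi-operator ideal $\alpha$ to be the ideal of all bounded operators (a Banach operator ideal, hence a left semi-operator ideal by Example \ref{exclass}(a)). With this choice $\mathcal{L}_{\alpha}(\mathcal{L}(E_{1},\ldots,E_{n});F^*) = \mathcal{L}(\mathcal{L}(E_{1},\ldots,E_{n});F^*)$, so the target of ${\cal B}_n$ is exactly the one in the statement, and by Lemma \ref{lemaHN} the domain $(\mathcal{N}\circ\mathcal{L}(E_{1},\ldots,E_{n};F))^*$ coincides isometrically with $(\mathcal{HN}(E_{1},\ldots,E_{n};F))^*$. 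Thus, modulo verifying item (a) of Theorem \ref{tthh}, the theorem is immediate.

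So the real content is the linear fact: the linear Borel transform ${\cal B}\colon \mathcal{N}(E;F)^* \to \mathcal{L}(E^*;F^*)$ is an isometric isomorphism whenever $E^*$ or $F$ has the approximation property. First I would invoke the classical isometric identification $E^*\widehat{\otimes}_{\pi} F = \mathcal{N}(E;F)$, valid under exactly this hypothesis \cite{df,ryan}, via the natural map $\Theta(x^*\otimes y)(x)=x^*(x)y$; this is the nuclear analogue of the isometry $E^*\widehat{\otimes}_\varepsilon F = \overline{\cal F}(E;F)$ used in Theorem \ref{thapp}. Dualizing, $\Theta^*\colon \mathcal{N}(E;F)^* \to (E^*\widehat{\otimes}_{\pi} F)^*$ is an isometric isomorphism, and composing with the canonical isometric isomorphism $(E^*\widehat{\otimes}_{\pi} F)^* = \mathcal{L}(E^*;F^*)$, $\psi\mapsto[(x^*,y)\mapsto\psi(x^*\otimes y)]$, one checks exactly as in the proof of Theorem \ref{thapp} that the composite equals ${\cal B}$, since $(W\circ\Theta^*)(\phi)(x^*)(y)=\Theta^*(\phi)(x^*\otimes y)=\phi(\Theta(x^*\otimes y))=\phi(x^*\otimes y)={\cal B}(\phi)(x^*)(y)$.

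To produce the ``or'' in the hypothesis I would apply the implication (a)$\Rightarrow$(b) of Theorem \ref{tthh} twice. Taking $P_{1}$ to be the approximation property and $P_{2}$ the (geometric) property shared by every Banach space, the linear fact gives item (a), so item (b) yields that ${\cal B}_n$ is an isometric isomorphism whenever $\mathcal{L}(E_{1},\ldots,E_{n})$ has the approximation property. Taking instead $P_{1}$ trivial and $P_{2}$ the approximation property, item (a) again holds by the linear fact, so item (b) gives the same conclusion whenever $F$ has the approximation property. Since the approximation property of $F^*$ implies that of $F$, the two applications together cover the hypothesis that $\mathcal{L}(E_{1},\ldots,E_{n})$ or $F^*$ has the approximation property (in fact they already cover the slightly stronger condition with $F$ in place of $F^*$).

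The only genuine obstacle is the linear fact, and the crux is that the approximation property is precisely what forces $\Theta$ to be injective, hence makes $\mathcal{N}(E;F)$ all of $E^*\widehat{\otimes}_{\pi} F$; without it $\Theta$ is merely a metric surjection and ${\cal B}$ fails to be onto. Once $\Theta$ is an isometric isomorphism the identification of the composite of the two canonical dualities with ${\cal B}$ is routine, and the entire passage to the multilinear statement is absorbed into Theorem \ref{tthh} together with Lemma \ref{lemaHN}.
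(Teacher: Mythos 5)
Your proposal is correct and takes essentially the same route as the paper: the linear fact is obtained from the isometry $E^*\widehat{\otimes}_\pi F = {\cal N}(E;F)$ (valid when $E^*$ or $F$ has the approximation property) by dualizing and composing with the canonical identification $(E^*\widehat{\otimes}_\pi F)^* = {\cal L}(E^*;F^*)$, and the multilinear statement then follows from Theorem \ref{tthh} together with Lemma \ref{lemaHN}. Your explicit double application of Theorem \ref{tthh}, with a trivial geometric property in one slot, spells out what the paper leaves implicit in handling the ``or'' hypothesis, and your remark that the weaker assumption that $F$ (rather than $F^*$) has the approximation property already suffices is accurate.
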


\begin{proof}  We believe it is known that $[{\cal N}(E;F)]^* \stackrel{{\cal B}}{=} {\cal L}(E^*,F^*)$ if $E^*$ or $F$ has the approximation property. For the sake of completeness, we give a short reasoning.
By \cite[5.6 Corollary 1, p.\,65]{df}, the operator
$$T \colon E^*\widehat{\otimes}_\pi F\longrightarrow {\cal N}(E;F)~,~T(x^*\otimes y)(x) = x^*(x)y, $$
is an isometric isomorphism, so is its adjoint $T^* \colon {\cal N}(E;F)^* \longrightarrow  (E^*\widehat{\otimes}_\pi F)^*. $
It is well know that the operators
$$U \colon (E^*\widehat{\otimes}_\pi F)^* \longrightarrow {\cal L}(E^*,F;\mathbb{K})~,~ U(\psi)(x^*,y) = \psi(x^* \otimes y), $$
$$V \colon {\cal L}(E^*,F;\mathbb{K}) \longrightarrow {\cal L}(E^*;F^*)~,~V(A)(x^*)(y) = A(x^*,y), $$
are isometric isomorphisms \cite{ryan, mujica} (actually $U$ is the inverse of the linearization operator $A \mapsto A_L)$. It is enough to show that ${\cal B} = V \circ U \circ T^*$. Indeed, for $\phi \in {\cal N}(E,F)^*, x^* \in E^*$ and $y \in F$,
\begin{align*} (V \circ U \circ T^*)(\phi)(x^*)(y) &=V(U(T^*(\phi)))(x^*)(y) = U(T^*(\phi))(x^*, y) \\&= T^*(\phi)(x^*\otimes y) = \phi(T(x^*\otimes y)) = {\cal B}(\phi)(x^*)(y).
\end{align*}
Theorem \ref{tthh} gives $[{\cal N} \circ {\cal L}(E_{1},\ldots,E_{n};F)]^* \stackrel{{\cal B}_n}{=} {\cal L}(\mathcal{L}(E_{1},\ldots,E_{n});F^*)$ if ${\cal L}(E_1, \ldots, E_n)$ or $F^*$ has the approximation property and Lemma \ref{lemaHN} finishes the proof.
\end{proof}

Denote by ${\cal L}_{\cal N}$ the Banach ideal of nuclear multilinear operators introduced by Gupta \cite{gupta2} (see also \cite{alencar, cardim, matos}). Improving previous results of Gupta \cite{gupta2} and Dwyer \cite{dwyer}, Carando and Dimant \cite{cardim} proved that ${\cal L}_{\cal N}(E_1, \ldots, E_n;F)^* = {\cal L}(E_1^*, \ldots, E_n^*;F^*)$ if $E_1, \ldots, E_n$ have the approximation property. Besides of representing linear functionals as linear operators rather than as multilinear operators, our representation in Theorem \ref{repHN} holds for all $E_1, \ldots, E_n$ if $F^*$ has the approximation property.

Now we represent linear functionals on spaces of hyper-$(s,r)$-nuclear multilinear operators as absolutely summing linear operators. For $1 \leq p \leq q < \infty$, by $\Pi_{q,p}$ we denote the Banach ideal of $(q,p)$-summing linear operators \cite[Chapter 10]{djt}, that is, operators that send weakly $p$-summable sequences to absolutely $q$-summable sequences. %Since weakly 1-summable sequences are bounded, we can write $\Pi_{\infty,1}(E;F) = {\cal L}(E;F)$.

%Our first application shows that linear functionals on spaces of hyper-$(s,r)$-nuclear multilinear operators are represented by $(s^*, r)$-summing linear operators, and that linear functionals on the space of hyper-nuclear multilinear operators are represented by bounded linear operators.

\begin{theorem} Let $1 \leq s,r < \infty$ be such that $1 \leq \frac{1}{s} + \frac{1}{r}$ and suppose that ${\cal L}(E_1, \ldots, E_n)$ has the bounded approximation property. Then
$$[{\mathcal{HN}_{s,r}}(E_{1},\ldots,E_{n};F)]^* \stackrel{{\cal B}_n}{=} \Pi_{s^*,r}(\mathcal{L}(E_{1},\ldots,E_{n});F^*) $$
for every Banach space $F$.
%{\rm(b)} The hyper-Borel transform
%is an isometric isomorphism for every Banach space $F$.
\end{theorem}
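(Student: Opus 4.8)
The plan is to deduce the statement from Theorem \ref{tthh} together with the isometric identity $\mathcal{HN}_{s,r} = \mathcal{N}_{s,r}\circ\mathcal{L}$ of Lemma \ref{lemaHN}. I take $\mathcal{I} = \mathcal{N}_{s,r}$, $\alpha = \Pi_{s^*,r}$, $P_{1}$ = the bounded approximation property and $P_{2}$ = the trivial geometric property enjoyed by every Banach space. The inequality $1 \le \frac{1}{s}+\frac{1}{r}$ gives $r\le s^*$, so $\Pi_{s^*,r}$ is a genuine Banach operator ideal and hence, by Example \ref{exclass}(a), a left semi-operator ideal, which is what Theorem \ref{tthh} requires. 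The hypothesis that $\mathcal{L}(E_{1},\ldots,E_{n})$ has the bounded approximation property is exactly condition $P_{1}$ on $\mathcal{L}(E_{1},\ldots,E_{n})$ in item (b) of that theorem, and ``for every Banach space $F$'' is condition $P_{2}$ on $F$. Since Lemma \ref{lemaHN} identifies $[\mathcal{HN}_{s,r}(E_{1},\ldots,E_{n};F)]^*$ isometrically with $[\mathcal{N}_{s,r}\circ\mathcal{L}(E_{1},\ldots,E_{n};F)]^*$, the whole theorem reduces via the implication (a)$\Rightarrow$(b) of Theorem \ref{tthh} to proving the linear statement $[\mathcal{N}_{s,r}(E;F)]^*\stackrel{\mathcal{B}}{=}\Pi_{s^*,r}(E^*;F^*)$ for all Banach spaces $E,F$ such that $E^*$ has the bounded approximation property.

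To prove this linear statement I would argue in two halves. The first half, that $\mathcal{B}$ is well defined into $\Pi_{s^*,r}$ with $\|\mathcal{B}\|\le 1$, requires no approximation property: given $\phi\in[\mathcal{N}_{s,r}(E;F)]^*$ and a finite weakly $r$-summable family $(x_j^*)_{j=1}^m$ in $E^*$, I test $\big(\sum_j\|\mathcal{B}(\phi)(x_j^*)\|^{s^*}\big)^{1/s^*}$ against scalars $(\mu_j)\in B_{\ell_s}$ and vectors $y_j\in B_F$, write $\sum_j\mu_j\,\mathcal{B}(\phi)(x_j^*)(y_j)=\phi\big(\sum_j\mu_j\,x_j^*\otimes y_j\big)$, and bound the $\|\cdot\|_{\mathcal{N}_{s,r}}$-norm of the finite-rank operator $\sum_j\mu_j\,x_j^*\otimes y_j$ by $\|(\mu_j)\|_s\,\|(x_j^*)\|_{w,r}\,\|(y_j)\|_\infty$ straight from the definition of the $(s,r)$-nuclear norm. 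Taking suprema and using $\ell_s$--$\ell_{s^*}$ duality yields $\|\mathcal{B}(\phi)\|_{\Pi_{s^*,r}}\le\|\phi\|$. Injectivity of $\mathcal{B}$ is then automatic from Proposition \ref{firstpro}(b), because the finite-rank operators are $\|\cdot\|_{\mathcal{N}_{s,r}}$-dense (the partial sums of an $(s,r)$-nuclear representation converge in norm when $s<\infty$, exactly as computed in Lemma \ref{lemaHN}).

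The second, substantive half is surjectivity together with the reverse norm estimate. Given $u\in\Pi_{s^*,r}(E^*;F^*)$ I define a functional $\phi$ on the finite-rank operators by $\phi(x^*\otimes y)=u(x^*)(y)$; this is well defined since the bounded bilinear form $(x^*,y)\mapsto u(x^*)(y)$ factors through $E^*\otimes F$, which embeds injectively into $\mathcal{L}(E;F)$. For a finite representation $A=\sum_j\lambda_j\,x_j^*\otimes y_j$, Hölder's inequality gives $|\phi(A)|\le\|(\lambda_j)\|_s\,\big(\sum_j\|u(x_j^*)\|^{s^*}\big)^{1/s^*}\,\|(y_j)\|_\infty$, and the $(s^*,r)$-summing inequality for $u$ bounds the middle factor by $\|u\|_{\Pi_{s^*,r}}\,\|(x_j^*)\|_{w,r}$. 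Passing to the infimum over representations then yields $|\phi(A)|\le\|u\|_{\Pi_{s^*,r}}\,\|A\|_{\mathcal{N}_{s,r}}$, so $\phi$ extends by density to a functional on $\mathcal{N}_{s,r}(E;F)$ with $\|\phi\|\le\|u\|_{\Pi_{s^*,r}}$ and $\mathcal{B}(\phi)=u$ by construction. Combined with the first half this forces $\|\phi\|=\|u\|_{\Pi_{s^*,r}}$, giving the isometric isomorphism.

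I expect the main obstacle to be precisely the point at which the bounded approximation property is unavoidable. The Hölder estimate above controls $|\phi(A)|$ by the $(s,r)$-nuclear norm computed from \emph{finite} representations, whereas boundedness with respect to $\|\cdot\|_{\mathcal{N}_{s,r}}$ needs these two quantities to coincide on finite-rank operators. This coincidence is exactly the assertion that the natural map onto $\mathcal{N}_{s,r}(E;F)$ from the tensor product $E^*\widehat{\otimes}_{g_{s,r}}F$ (for the appropriate Chevet--Saphar type tensor norm $g_{s,r}$) is an isometric isomorphism, which holds under the bounded approximation property of $E^*$ and is the correct generalization of the projective-tensor identification $E^*\widehat{\otimes}_\pi F\cong\mathcal{N}(E;F)$ used for the nuclear case in Theorem \ref{repHN}. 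I would therefore either invoke the corresponding classical tensor-norm duality from the literature or verify this coincidence of norms directly, and then assemble the linear statement into the multilinear one through Theorem \ref{tthh} and Lemma \ref{lemaHN} as described in the first paragraph.
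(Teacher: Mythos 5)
Your proposal follows the paper's proof exactly: take $\mathcal{I}=\mathcal{N}_{s,r}$ and $\alpha=\Pi_{s^*,r}$ (a Banach operator ideal, hence a left semi-operator ideal by Example \ref{exclass}(a)), let $P_1$ be the bounded approximation property and $P_2$ be trivial, and combine Theorem \ref{tthh} with Lemma \ref{lemaHN}. The linear statement you reduce everything to, namely $[\mathcal{N}_{s,r}(E;F)]^*\stackrel{\mathcal{B}}{=}\Pi_{s^*,r}(E^*;F^*)$ whenever $E^*$ has the bounded approximation property, is exactly what the paper quotes as the linear case of \cite[Theorem 4.1]{matos}; your sketched direct proof of it --- whose one genuine difficulty you correctly locate at the surjectivity step, where the BAP of $E^*$ is needed so that the value $\sum_j\lambda_j u(x_j^*)(y_j)$ is independent of the representation and controlled by $\|\cdot\|_{\mathcal{N}_{s,r}}$ --- is therefore subsumed by, and as you yourself anticipate ultimately falls back on, the same kind of literature citation the paper uses.
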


\begin{proof} The linear case of \cite[Theorem 4.1]{matos} asserts that $[{\cal N}_{s,r}(E;F)]^* \stackrel{{\cal B}}{=} \Pi_{s^*,r}(E^*,F^*)$  whenever $E^*$ has the bounded approximation property. Since $\Pi_{s^*,r}$ is a Banach operator ideal, just combine Theorem \ref{tthh} with Lemma \ref{lemaHN} to obtain the result.
\end{proof}

\subsection{Hyper-$\sigma(p)$-nuclear operators}
The applications given thus far concern well studied classes of multilinear operators. In this section we give an application to a new class.

The Banach ideal ${\cal L}_{\sigma(p)}$ of $\sigma(p)$-nuclear multilinear operators was introduced in \cite{ximenaLAA} as linear and multilinear generalizations of the ideal ${\cal N}_{\sigma}$ of $\sigma$-nuclear linear operators studied by Pietsch \cite{livropietsch}. Its linear component, that is the ideal of $\sigma(p)$-nuclear linear operators, shall be denoted by ${\cal N}_{\sigma(p)}$. In this fashion, ${\cal N}_\sigma = {\cal N}_{\sigma(1)}$. In this section we introduce and represent the linear functionals on a new multilinear generalization of ${\cal N}_{\sigma(p)}$.

\begin{definition}\rm
 Let $1 \leq p < \infty$. We say that an $n$-linear operator $A \in \mathcal{L}(E_{1},\ldots,E_{n};F)$ is
 {\it hiper-$\sigma(p)$-nuclear} if there are sequences $(\lambda_{j})_{j=1}^{\infty} \in \ell_{p^*}$, $(B_{j})_{j=1}^{\infty}$  in $\mathcal{L}(E_{1},\ldots,E_{n})$ and
 $(y_{j})_{j=1}^{\infty}$ in $F$ such that\\
(i) $A(x_{1},\ldots,x_{n}) = \sum\limits_{j=1}^{\infty} \lambda_{j}B_{j}(x_{1},\ldots,x_{n})y_{j}$ for all $x_j \in E_j, j = 1,\ldots, n$;\\
(ii) $\sup\limits_{x_{i} \in B_{E_{i}},y^* \in B_{F^*}} \bigg(\sum\limits_{j=1}^{\infty} |B_{j}(x_{1},\ldots,x_{n})y^*(y_{j})|^{p}\bigg)^{\frac{1}{p}} < \infty$;\\
(iii) $\lim\limits_{m \to \infty} \sup\limits_{x_{i} \in B_{E_{i}},y^* \in B_{F^*}} \bigg(\sum\limits_{j=m}^{\infty} |B_{j}(x_{1},\ldots,x_{n})y^*(y_{j})|^{p}\bigg)^{\frac{1}{p}} = 0$.

In this case we say that $A = \sum\limits_{j=1}^{\infty} \lambda_{j}B_{j} \otimes y_{j}$ is a hyper-$\sigma(p)$-representation of $A$, write $A \in {\cal HN}_{\sigma(p)}(E_1,\ldots, E_n;F)$  and define
$$\|A\|_{\mathcal{HN}\sigma(p)} = \inf \left\{ \|(\lambda_{j})_{j=1}^{\infty}\|_{p^*} \cdot \sup\limits_{x_{i} \in B_{E_{i}},y^* \in B_{F^*}} \left( \sum\limits_{j=1}^{\infty} |B_{j}(x_{1},\ldots,x_{n})y^*(y_{j})|^{p} \right)^{\frac{1}{p}}
\right\},
$$
where the infimum is taken over all hyper-$\sigma(p)$-representation of $A$. H\"older's inequality implies that $\|\cdot\| \leq \|\cdot\|_{\mathcal{HN}\sigma(p)}$.
%\noindent
%onde o ínfimo é tomado sobre todas as representações hiper-$\sigma(p)$-nucleares de $A$.
%Denoting by ${\cal HN}_{\sigma(p)}$
\end{definition}

Soon we will see that $({\cal HN}_{\sigma(p)}, \|\cdot\|_{{\cal HN}_{\sigma(p)}})$ is a Banach hyper-ideal of multilinear operators. Hyper-$\sigma(p)$-nuclear linear operators coincide with the Banach ideal of $\sigma(p)$-nuclear linear operators from \cite{ximenaLAA}. In this case we simply write ${\cal N}_{\sigma(p)}(E;F)$ and $\|\cdot\|_{\sigma(p)}$.

\begin{example}\rm The class ${\cal HN}_{\sigma(p)}$ is neither very small nor very large because it lies between the class ${\cal L}_{\cal F}$ of finite rank operators and its usual norm closure $\overline{{\cal L}_{\cal F}}$. Every $\sigma(p)$-nuclear multilinear operator \cite{ximenaLAA} is hyper-$\sigma(p)$-nuclear but the converse is not true: Let $A$ be any multilinear operator of finite rank that cannot be approximated, in the usual norm, by multilinear operators of finite type. For example:
$$A \colon \ell_2 \times \ell_2 \longrightarrow \mathbb{K}~,~A\left((x_j)_{j=1}^\infty, (y_j)_{j=1}^\infty \right) = \sum_{j=1}^\infty x_jy_j.$$
Then, $A$ is hyper-$\sigma(p)$-nuclear but is not $\sigma(p)$-nuclear because every $\sigma(p)$-nuclear multilinear operator can be approximated by finite type operators in the usual norm.
\end{example}

\begin{lemma}\label{lemmel} If $A = \sum\limits_{j=1}^{\infty} \lambda_{j}A_{j}\otimes y_{j}$ is a hyper-$\sigma(p)$-nuclear representation of $A$, then the convergence occurs in the norm $\|\cdot\|_{{\cal HN}_{\sigma(p)}}$. So, the convergence also occurs in the usual norm and, in particular, ${\cal HN}_{\sigma(p)}$ is a dense subspace of both  $ \overline{{\cal L}_{\cal F}}^{\|\cdot\|_{{\cal HN}_{\sigma(p)}}}$ and $ \overline{{\cal L}_{\cal F}}^{\|\cdot\|}$.
\end{lemma}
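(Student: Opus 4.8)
The plan is to establish the claimed convergence directly from the defining conditions of a hyper-$\sigma(p)$-nuclear representation, and then to deduce the two density statements. Given a representation $A = \sum_{j=1}^\infty \lambda_j A_j \otimes y_j$ with $(\lambda_j) \in \ell_{p^*}$ and conditions (i)--(iii) of the definition holding, I would set $A_m := \sum_{j=1}^m \lambda_j A_j \otimes y_j$ and estimate $\|A - A_m\|_{{\cal HN}_{\sigma(p)}}$ by observing that $A - A_m = \sum_{j=m+1}^\infty \lambda_j A_j \otimes y_j$ is itself a hyper-$\sigma(p)$-nuclear representation of the tail. Plugging this tail representation into the definition of the norm gives the upper bound
\begin{equation*}
\|A - A_m\|_{{\cal HN}_{\sigma(p)}} \leq \|(\lambda_j)_{j=m+1}^\infty\|_{p^*} \cdot \sup_{x_i \in B_{E_i}, y^* \in B_{F^*}} \left( \sum_{j=m+1}^\infty |A_j(x_1, \ldots, x_n) y^*(y_j)|^p \right)^{\frac{1}{p}}.
\end{equation*}
The first factor is bounded (indeed tends to $0$ as $m \to \infty$ since $(\lambda_j) \in \ell_{p^*}$), and the second factor tends to $0$ by condition (iii). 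Hence the product tends to $0$, which shows $A_m \to A$ in $\|\cdot\|_{{\cal HN}_{\sigma(p)}}$. The main subtlety here is the bookkeeping of indices in the tail supremum: condition (iii) is stated with the sum starting at $j=m$, so I would just match that index convention (or absorb the shift, which changes nothing in the limit) to conclude the tail supremum vanishes.

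Once the $\|\cdot\|_{{\cal HN}_{\sigma(p)}}$-convergence is in hand, the convergence in the usual operator norm is immediate from the inequality $\|\cdot\| \leq \|\cdot\|_{{\cal HN}_{\sigma(p)}}$, which was already recorded (via H\"older's inequality) right after the definition. Since each partial sum $A_m$ is a finite rank operator, lying in ${\cal L}_{\cal F}$, the convergence $A_m \to A$ in the usual norm shows $A \in \overline{{\cal L}_{\cal F}}$; this gives the inclusion ${\cal HN}_{\sigma(p)} \subseteq \overline{{\cal L}_{\cal F}}$. For the density assertions, I would note that finite rank operators are trivially hyper-$\sigma(p)$-nuclear (a finite representation satisfies (i)--(iii) vacuously for the tail), so ${\cal L}_{\cal F} \subseteq {\cal HN}_{\sigma(p)}$, and the partial sums $A_m$ exhibit each $A \in {\cal HN}_{\sigma(p)}$ as a limit of elements of ${\cal L}_{\cal F}$ in both norms. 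Therefore ${\cal HN}_{\sigma(p)}$ is sandwiched between ${\cal L}_{\cal F}$ and the $\|\cdot\|_{{\cal HN}_{\sigma(p)}}$-closure (resp. $\|\cdot\|$-closure) of ${\cal L}_{\cal F}$, which forces ${\cal HN}_{\sigma(p)}$ to be dense in $\overline{{\cal L}_{\cal F}}^{\|\cdot\|_{{\cal HN}_{\sigma(p)}}}$ and in $\overline{{\cal L}_{\cal F}}^{\|\cdot\|}$.

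I expect no genuine obstacle in this argument: it is essentially the same tail-estimate mechanism used in the proof of the earlier theorem to show that the partial sums of a hyper-nuclear representation converge in $\|\cdot\|_{\cal HN}$, transported to the $\sigma(p)$ setting where the single scalar norm on the sequence is replaced by the product of the $\ell_{p^*}$-norm of $(\lambda_j)$ with the sup-expression of condition (ii). The only place demanding care is verifying that the tail $\sum_{j=m+1}^\infty \lambda_j A_j \otimes y_j$ really is an admissible hyper-$\sigma(p)$-nuclear representation of $A - A_m$ whose defining supremum is controlled by the full representation's data — but this follows because the tail sequences $(\lambda_j)_{j>m}$, $(A_j)_{j>m}$, $(y_j)_{j>m}$ inherit conditions (i)--(iii) from the original, the supremum over the tail being dominated by (and in fact converging to zero by) condition (iii).
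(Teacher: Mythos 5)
Your proposal is correct and takes essentially the same route as the paper: estimate $\|A-\sum_{j=1}^{m}\lambda_j A_j\otimes y_j\|_{\mathcal{HN}\sigma(p)}$ by the tail representation and pass to the limit, then use $\|\cdot\|\leq\|\cdot\|_{\mathcal{HN}\sigma(p)}$ for the remaining assertions. The only (harmless) difference is that you make the supremum factor vanish via condition (iii), while the paper only needs it to be finite via condition (ii) and lets the factor $\|(\lambda_j)_{j=m+1}^{\infty}\|_{p^*}\to 0$ do the work; your explicit treatment of the two density statements fills in details the paper leaves implicit.
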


\begin{proof} The first assertion follows from
\begin{align*}  \lim_{m \to \infty}&\left\|A - \sum\limits_{j=1}^{m} \lambda_{j} A_{j} \otimes y_{j}\right\|_{\mathcal{HN}\sigma(p)} = \lim_{m \to \infty}\left\|\sum\limits_{j=m+1}^{\infty} \lambda_{j} A_{j} \otimes y_{j}\right\|_{\mathcal{HN}\sigma(p)}\\
& \leq \lim_{m \to \infty}\left[\|(\lambda_{j})_{j=m+1}^{\infty}\|_{p^*} \cdot \sup\limits_{x_{i} \in B_{E_{i}},y^* \in B_{F^*}} \left(\sum\limits_{j=m+1}^{\infty}|A_{j}(x_{1},\ldots,x_{n})y^*(y_{j})|^{p}\right)^{\frac{1}{p}}\right]\\
& \leq \sup\limits_{x_{i} \in B_{E_{i}},y^* \in B_{F^*}} \left(\sum\limits_{j=1}^{\infty}|A_{j}(x_{1},\ldots,x_{n})y^*(y_{j})|^{p}\right)^{\frac{1}{p}} \cdot \lim_{m \to \infty}\|(\lambda_{j})_{j=m+1}^{\infty}\|_{p^*} = 0
\end{align*}
because $(\lambda_{j})_{j=1}^{\infty} \in \ell_{p^*}$. The second assertion now follows from the inequality $\|\cdot\| \leq \|\cdot\|_{\mathcal{HN}\sigma(p)}$.
%$\bigg|\bigg|A - \sum\limits_{j=1}^{n} \lambda_{j} A_{j} \otimes y_{j}\bigg|\bigg|_{\mathcal{H}\sigma(p)}$ = $\bigg|\bigg| \sum\limits_{j=1}^{\infty} \lambda_{j} A_{j} \otimes y_{j} - \sum\limits_{j=1}^{n} \lambda_{j} A_{j} \otimes y_{j}\bigg|\bigg|_{\mathcal{H}\sigma(p)}$
%\newline
%\newline
%\newline
%\hspace*{4.5cm} = $\bigg|\bigg| \sum\limits_{j=n+1}^{\infty} \lambda_{j} A_{j} \otimes y_{j}\bigg|\bigg|_{\mathcal{H}\sigma(p)}$
%\newline
%\newline
%\newline
%\hspace*{4.5cm} = $\inf \bigg\{ ||(\lambda_{j})_{j=n+1}^{\infty}||_{p'} . \sup\limits_{x_{i} \in B_{E_{i}},y' \in B_{F'}} \bigg(\sum\limits_{j=n+1}^{\infty}|A_{j}(x_{1},...,x_{n})y'(y_{j})|^{p})^{\frac{1}{p}} \bigg\} $
%\newline
%\newline
%\newline
%\hspace*{4.5cm} = $\bigg(\sum\limits_{j=n+1}^{\infty} |\lambda_{j}|^{p'}\bigg)^{\frac{1}{p'}} . \sup\limits_{x_{i} \in B_{E_{i}},y' \in B_{F'}} \bigg(\sum\limits_{j=n+1}^{\infty}|A_{j}(x_{1},...,x_{n})y'(y_{j})|^{p}\bigg)^{\frac{1}{p}}$
%\newline
%\newline
%\newline
%\hspace*{4cm} $\leq$ $\bigg(\sum\limits_{j=n+1}^{\infty} |\lambda_{j}|^{p'}\bigg)^{\frac{1}{p'}} . \sup\limits_{x_{i} \in B_{E_{i}},y' \in B_{F'}} \bigg(\sum\limits_{j=1}^{\infty}|A_{j}(x_{1},...,x_{n})y'(y_{j})|^{p}\bigg)^{\frac{1}{p}}$
%\newline
%\newline
%\newline
%\hspace*{4cm} = $K. \bigg(\sum\limits_{j=n+1}^{\infty} |\lambda_{j}|^{p'}\bigg)^{\frac{1}{p'}}$ $\rightarrow 0$, $n \rightarrow \infty$,
\end{proof}

\begin{lemma}\label{llemma} For $1 \leq p < \infty$, $\mathcal{HN}_{\sigma(p)} = \mathcal{N}_{\sigma(p)} \circ {\cal L}$ isometrically. In particular, $\mathcal{HN}_{\sigma(p)}$ is a Banach hyper-ideal of multilinear operators.
\end{lemma}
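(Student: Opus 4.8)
\textbf{Proof proposal for Lemma \ref{llemma}.}

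The plan is to mimic the two-inclusion strategy used in the proof of Lemma \ref{lemaHN}, adapting the hyper-nuclear estimates to the $\sigma(p)$ setting. I would first establish the inclusion $\mathcal{N}_{\sigma(p)} \circ {\cal L} \subseteq \mathcal{HN}_{\sigma(p)}$ with the norm inequality $\|A\|_{\mathcal{HN}_{\sigma(p)}} \leq \|A\|_{\mathcal{N}_{\sigma(p)} \circ {\cal L}}$. Given $A = u \circ B$ with $B \in {\cal L}(E_1,\ldots,E_n;G)$ and $u \in \mathcal{N}_{\sigma(p)}(G;F)$, I would take a $\sigma(p)$-nuclear representation $u = \sum_{j=1}^\infty \lambda_j w_j^* \otimes y_j$ that nearly achieves $\|u\|_{\sigma(p)}$, and then write $A = \sum_{j=1}^\infty \lambda_j (w_j^* \circ B) \otimes y_j$, noting that $w_j^* \circ B \in {\cal L}(E_1,\ldots,E_n)$ via the adjoint map $B^*$ exactly as in Lemma \ref{lemaHN}. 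The point to verify is that the $\sigma(p)$-defining supremum transforms correctly: the quantity $\sup_{x_i \in B_{E_i}, y^* \in B_{F^*}} \left(\sum_j |(w_j^* \circ B)(x_1,\ldots,x_n) y^*(y_j)|^p\right)^{1/p}$ is controlled by $\|B\|$ times the corresponding linear $\sigma(p)$-supremum for $u$, and conditions (ii) and (iii) are inherited. This gives $\|A\|_{\mathcal{HN}_{\sigma(p)}} \leq \|B\| \cdot (1+\varepsilon)\|u\|_{\sigma(p)}$, and passing to the infimum over factorizations and letting $\varepsilon \to 0$ yields the inequality.

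For the reverse inclusion, given $A \in \mathcal{HN}_{\sigma(p)}(E_1,\ldots,E_n;F)$ with representation $A = \sum_{j=1}^\infty \lambda_j B_j \otimes y_j$, I would use Lemma \ref{lemmel} to know that this series converges in $\|\cdot\|_{\mathcal{HN}_{\sigma(p)}}$, hence in the usual norm. Applying the linearization isomorphism $B \mapsto B_L$ (linear and continuous) as in equation (\ref{yuyu}) of Lemma \ref{lemaHN}, I obtain $A_L = \sum_{j=1}^\infty \lambda_j (B_j)_L \otimes y_j$ in ${\cal L}(E_1 \widehat{\otimes}_\pi \cdots \widehat{\otimes}_\pi E_n; F)$. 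The crucial identification is that the isometric isomorphism $B \in {\cal L}(E_1,\ldots,E_n) \mapsto B_L \in (E_1 \widehat{\otimes}_\pi \cdots \widehat{\otimes}_\pi E_n)^*$ preserves the $\sigma(p)$-type supremum, because $B_j(x_1,\ldots,x_n) = (B_j)_L(x_1 \otimes \cdots \otimes x_n)$ and the closed unit ball of the projective tensor product is the closed convex hull of the elementary tensors $x_1 \otimes \cdots \otimes x_n$ with $\|x_i\| \leq 1$. I would check that this convex-hull density lets the supremum over products $x_1 \otimes \cdots \otimes x_n$ be replaced by the supremum over the whole unit ball of the tensor product without changing its value, so that conditions (ii) and (iii) for $A$ translate exactly into the $\sigma(p)$-nuclearity conditions for $A_L$, with matching norm. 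This shows $A_L \in \mathcal{N}_{\sigma(p)}(E_1 \widehat{\otimes}_\pi \cdots \widehat{\otimes}_\pi E_n; F)$ with $\|A_L\|_{\sigma(p)} \leq \|A\|_{\mathcal{HN}_{\sigma(p)}}$, and then \cite[Propositions 3.2 and 3.7]{prims} give $A \in \mathcal{N}_{\sigma(p)} \circ {\cal L}$ with $\|A\|_{\mathcal{N}_{\sigma(p)} \circ {\cal L}} \leq \|A\|_{\mathcal{HN}_{\sigma(p)}}$. Combining both inequalities yields the isometric identity, and the ``in particular'' clause follows since $\mathcal{N}_{\sigma(p)}$ is a Banach operator ideal and composition ideals of multilinear operators are Banach hyper-ideals.

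The main obstacle I anticipate is the careful handling of the defining suprema in conditions (ii) and (iii) under linearization. Unlike the hyper-$(s,r)$-nuclear case, where one relies cleanly on the $\ell_r^w$ structure and a known estimate from \cite[p.\,92]{df}, here the $\sigma(p)$-condition is phrased directly as a supremum of $\ell_p$-norms over the product of unit balls, coupled with a tail condition. The subtle point is confirming that replacing the supremum over decomposable elements $x_1 \otimes \cdots \otimes x_n \in B_{E_1} \times \cdots \times B_{E_n}$ by the supremum over the full unit ball $B_{E_1 \widehat{\otimes}_\pi \cdots \widehat{\otimes}_\pi E_n}$ leaves the value unchanged; this requires the observation that the relevant functional $(x_1 \otimes \cdots \otimes x_n) \mapsto \left(\sum_j |(B_j)_L(x_1 \otimes \cdots \otimes x_n) y^*(y_j)|^p\right)^{1/p}$ extends, after taking the supremum over $y^*$, to a seminorm whose supremum over the convex hull of elementary tensors agrees with its supremum over the ball. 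Once this geometric fact is pinned down, both directions reduce to bookkeeping already demonstrated in Lemma \ref{lemaHN}.
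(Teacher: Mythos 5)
Your proposal is correct, and its skeleton coincides with the paper's: both inclusions are proved the same way, the first by composing a near-optimal $\sigma(p)$-nuclear representation of $u$ with $B$ (the paper also omits these routine details), the second by passing to the linearization $A_L = \sum_{j} \lambda_j (B_j)_L \otimes y_j$ via Lemma \ref{lemmel} and concluding with \cite[Propositions 3.2 and 3.7]{prims}. Where you genuinely diverge is in the key transfer step, namely showing that conditions (ii) and (iii) survive linearization with the same value of the supremum. The paper does this by a finite-section duality computation: for $z \in B_{E_{1} \widehat{\otimes}_{\pi} \cdots \widehat{\otimes}_{\pi} E_{n}}$ and $y^* \in B_{F^*}$ it uses $(\ell_p^{k-m+1})^* = \ell_{p^*}^{k-m+1}$ and Hahn--Banach to produce scalars $(\theta_j)_{j=m}^k$ of $\ell_{p^*}$-norm one turning the $\ell_p$-sum into $\left|y^*\left(\left(\sum_{j=m}^k \theta_j (B_j)_L \otimes y_j\right)(z)\right)\right|$, bounds this by the operator norm of the finite-rank operator $\sum_{j=m}^k \theta_j (B_j)_L \otimes y_j$, invokes the isometry $\left\|\left(\sum_{j=m}^k \theta_j B_j \otimes y_j\right)_L\right\| = \left\|\sum_{j=m}^k \theta_j B_j \otimes y_j\right\|$, and returns to the multilinear $\sigma(p)$-supremum via H\"older's inequality, finishing with an interchange-of-suprema argument. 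You instead invoke the geometric fact that the unit ball of $E_{1} \widehat{\otimes}_{\pi} \cdots \widehat{\otimes}_{\pi} E_{n}$ is the closed convex hull of the elementary tensors $x_1 \otimes \cdots \otimes x_n$ with $\|x_i\| \leq 1$, together with the observation that $z \mapsto \sup_{y^* \in B_{F^*}} \left(\sum_{j \geq m} |(B_j)_L(z)\, y^*(y_j)|^p\right)^{1/p}$ is a supremum of continuous seminorms, hence a lower semicontinuous seminorm, so each sublevel set is closed and convex and the supremum over the closed convex hull of a set equals the supremum over the set itself. This is a valid and in fact shorter route; it makes the underlying geometry explicit, whereas the paper's chain of estimates uses only the isometry of linearization on finite-rank operators plus H\"older, never citing the convex-hull description directly (though that description is of course what makes the linearization isometric). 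Either mechanism yields, for every tail $\sum_{j \geq m}$, the equality of the tensor-ball supremum with the multilinear supremum, hence conditions (ii) and (iii) for $A_L$ and the estimate $\|A_L\|_{\mathcal{N}_{\sigma(p)}} \leq (1+\varepsilon)\|A\|_{\mathcal{HN}_{\sigma(p)}}$, so your version closes exactly as the paper's does.
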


\begin{proof} Given $ A  \in \mathcal{N}_{\sigma(p)} \circ \mathcal{L}(E_{1},\ldots,E_{n};F)$ and $\varepsilon > 0$, there exist a Banach space $G$, $B \in \mathcal{L}(E_{1},\ldots,E_{n};G)$, an operator $u \in \mathcal{N}_{\sigma(p)}(G;F)$ and a $\sigma(p)$-nuclear representation $\sum\limits_{j=1}^\infty \lambda_jw^*_j \otimes y_j$ of $u$ such that $A = u \circ B$ and
$$\|(\lambda_{j})_{j=1}^{\infty}\|_{p^*} \cdot \sup\limits_{w \in B_{G},y^* \in B_{F^*}} \left( \sum\limits_{j=1}^{\infty} |w^*_{j}(w)y^*(y_{j})|^{p} \right)^{\frac{1}{p}} < (1+ \varepsilon)\|u\|_{{\cal N}_{\sigma(p)}}. $$
We omit the details that $A = \sum\limits_{j=1}^\infty \lambda_j(w^*_j \circ B) \otimes y_j$ is a hyper-$\sigma(p)$-nuclear representation of $A$ and
$$\|A\|_{\mathcal{HN}_{\sigma(p)}}\leq (1 + \varepsilon)\|B\|\cdot \|u\|_{{\cal N}_{\sigma(p)}}, $$
from which it follows that $A$ is hyper-$\sigma(p)$-nuclear and $\|A\|_{\mathcal{HN}_{\sigma(p)}} \leq \|A\|_{\mathcal{N}_{\sigma(p)} \circ {\cal L}}$.

Conversely, let $A \in \mathcal{L}_{\mathcal{HN}\sigma(p)}(E_{1},\ldots,E_{n};F)$ and  $\varepsilon > 0$ be given and let %. Devemos mostrar que $A \in \mathcal{N}_{\sigma(p)} \circ \mathcal{L}(E_{1},\ldots,E_{n};F).$
%\medskip
%\noindent
%Da Propriedade Universal do Produto Tensorial, existe um único operador linear
%$$A_{L} \in \mathcal{L}(E_{1} \widehat{\otimes}_{\pi} .\cdots \widehat{\otimes}_{\pi}E_{n};F)$$
%tal que $A = A_{L} \circ \sigma_{n}$,  Portanto, pela Proposição 3.2 (Paper Botelho/Pilar/Daniel) é suficiente mostrar que $A_{L} \in \mathcal{N}_{\sigma(p)}(E_{1} \widehat{\otimes}_{\pi} \cdots \widehat{\otimes}_{\pi}E_{n};F).$
%\noindent {\bf Lema.} Se $T \in {\cal L}(E_1, \ldots, E_n)$ e $y \in F$, então $(T \otimes y)_L = T_L \otimes y$.5
%\medskip
%\noindent {\bf Dem. do Lema.} Para todos $x_1, \ldots, x_n \in E$,
%\begin{align*}((T_L \otimes y) \circ \sigma_n)(x_1, \ldots, x_n) &= (T_L \otimes y)(x_1 \otimes \cdots \otimes x_n) = T_L(x_1 \otimes \cdots \otimes %x_n)y\\
% &= T(x_1, \ldots, x_n) y = (T \otimes y)(x_1, \ldots, x_n).
%\end{align*}
%Isso prova que $(T_L \otimes y) \circ \sigma_n = T \otimes y$. Mas $T_L \otimes y$ é um operador linear contínuo, então da unicidade da linearização de um %operador multilinear segue que $(T \otimes y)_L = T_L \otimes y$. $\blacksquare$
%\medskip
%\noindent
%Suponhamos que
$A = \sum\limits_{j=1}^{\infty} \lambda_{j}B_{j} \otimes y_{j}$ be a  hyper-$\sigma(p)$-nuclear representation of $A$ such that%, isto é, $(\lambda_{j})_{j=1}^{\infty} \in \ell_{p'}$, $(T_{j})_{j=1}^{\infty} \in \mathcal{L}(E_{1},...,E_{n})$ e $(y_{j})_{j=1}^{\infty} \in F$ e são tais que
%\begin{equation} \label{eq 0}
%\sup\limits_{x_{i} \in B_{E}, y' \in B_{F'}} \bigg( \sum\limits_{j=1}^{\infty} |T_{j}(x_{1},\ldots,x_{n})\textcolor{blue}{y'(y_j)}|^{p} \bigg)^{\frac{1}{p}} < \infty,
%\end{equation}
%\begin{equation} \label{eq 00}\lim\limits_{m \to \infty} \sup\limits_{x_{i} \in B_{E}, y' \in B_{F'}} \bigg( \sum\limits_{j=m}^{\infty} |T_{j}(x_{1},\ldots,x_{n})\textcolor{blue}{y'(y_j)}|^{p} \bigg)^{\frac{1}{p}} = 0 ~{\rm e}
%\end{equation}
\begin{equation}\label{eq 000}\|(\lambda_j)_{j=1}^\infty\|_{p^*}\cdot \sup\limits_{x_{i} \in B_{E}, y^* \in B_{F'}} \bigg( \sum\limits_{j=1}^{\infty} |B_{j}(x_{1},\ldots,x_{n})y^*(y_j)|^{p} \bigg)^{\frac{1}{p}} \leq (1 + \varepsilon)\|A\|_{{\cal HN}\sigma(p)}.
\end{equation}
According to Lemma \ref{lemmel}, the convergence $A = \sum\limits_{j=1}^{\infty} \lambda_{j}B_{j} \otimes y_{j}$ occurs in the usual norm, and since the correspondence %E como a correspondência
\begin{equation*}B \in {\cal L}(E_1, \ldots, E_n;F) \mapsto B_L \in {\cal L}(E_{1} \widehat{\otimes}_{\pi} \cdots \widehat{\otimes}_{\pi}E_{n};F)
\end{equation*}
is an isometric isomoprhism, we have
%é um isomorfismo, logo linear e contínua, temos do Lema
\begin{align*}A_L = \left(\sum\limits_{j=1}^{\infty} \lambda_{j}B_{j} \otimes y_{j} \right)_L = \sum\limits_{j=1}^{\infty} (\lambda_{j}B_{j} \otimes y_{j})_L = \sum_{j=1}^\infty \lambda_{j}(B_{j})_L \otimes y_{j},
  % \left(\lim_m \sum\limits_{j=1}^m \lambda_{j}T_{j} \otimes y_{j} \right)_L = \lim_m \left(\sum\limits_{j=1}^m \lambda_{j}T_{j} \otimes y_{j} \right)_L\\
%&= \lim_m \sum\limits_{j=1}^m \lambda_{j}(T_{j} \otimes y_{j})_L = \lim_m \sum\limits_{j=1}^m \lambda_{j}(T_{j})_L \otimes y_{j} = \sum_{j=1}^\infty \lambda_{j}(T_{j})_L \otimes y_{j},
\end{align*}
in the norm of ${\cal L}(E_{1} \widehat{\otimes}_{\pi} \cdots \widehat{\otimes}_{\pi}E_{n};F)$. Let us prove that $\sum\limits_{j=1}^\infty \lambda_{j}(B_{j})_L \otimes y_{j}$ is a $\sigma(p)$-nuclear representation of $A_L$.
%\medskip
%\noindent
%Dados escalares $\lambda_1, \ldots, \lambda_m$, segue da dualidade $(\ell_p^m)' = \ell_{p'}^m$ e do Teorema de Hahn-Banach, que existem escalares $\theta_1, \ldots, \theta_m$ tais que
%$$\|(\theta_j)_{j=1}^m\|_{p'} = 1 {\rm ~e~} \left(\sum_{j=1}^m|\lambda_j|^p\right)^{1/p} = \left| \sum_{j=1}^m \theta_j \lambda_j \right| .$$
%\noindent{\bf Fato.} Dados escalares $\lambda_1, \ldots, \lambda_m$, existem escalares $\theta_1, \ldots, \theta_m$ tais que
%$$\|(\theta_j)_{j=1}^m\|_{p'} = 1 {\rm ~e~} \left(\sum_{j=1}^m|\lambda_j|^p\right)^{1/p} = \left| \sum_{j=1}^m \theta_j \lambda_j \right| .$$
%\medskip
%\noindent{\bf Dem. do Fato.} Segue da dualidade $(\ell_p^m)' = \ell_{p'}^m$ e do Teorema de Hahn-Banach na forma $\|x\| = \sup_{\|\varphi\|\leq %1}|\varphi(x)|$.
For $k > m$, %using again the isometric correspondence (\ref{isco}), we have %como a norma de um operador linear coincide com a norma da sua linearização, temos
%\begin{equation*}\label{eq 1}\left\| \sum_{j=m}^k T_j \otimes y_j\right\| = \left\|\sum_{j=m}^k (T_j)_L \otimes y_j\right\|.
%\end{equation*}
%Foi visto também que $A = \sum\limits_{j=1}^{\infty} \lambda_{j}T_{j} \otimes y_{j}$ na norma $\|\cdot\|_{H\sigma(p)}$, e portanto a sequência $\left(\sum\limits_{j=1}^n \lambda_j T_j \otimes y_j\right)_{n=1}^\infty$ é convergente na norma $\|\cdot\|_{H\sigma(p)}$, e portanto é uma sequência de Cauchy, isto é,
%\begin{equation}\label{eq 2}
%\lim_{m,n} \left\|\sum_{j= 1}^m \lambda_jT_j \otimes y_j -\sum_{j= 1}^n \lambda_jT_j \otimes y_j\right\|_{H\sigma(p)}=0.
%\end{equation}
%\noindent
  $z \in B_{E_{1} \widehat{\otimes}_{\pi} \cdots \widehat{\otimes}_{\pi}E_{n}}$ and $y^* \in B_{F^*}$, % e cada $j = m, \ldots, k$,
the duality  $(\ell_p^{k-m+1})^* = \ell_{p^*}^{k-m+1}$ and the Hahn-Banach Theorem provide scalars $\theta_{m}, \ldots, \theta_k$ such that $\|(\theta_j)_{j=m}^k\|_{p^*} = 1$ and
 \begin{equation*} \left(\sum_{j=m}^k|(B_j)_L(z)y^*(y_j)|^p\right)^{1/p} = \left| \sum_{j=m}^k \theta_j (B_j)_L(z)y^*(y_j) \right| .
\end{equation*}
%Note que $\|\theta_j y'\| = \|y'\|\leq 1$, e portanto $\theta_j y\in B_{F'}$.
%Usaremos agora o seguinte : (i) o Teorema de Hahn-Banach na forma $\|x\| = \sup_{\|\varphi\|\leq 1}|\varphi(x)|$, (ii) a igualdade (\ref{eq 1}), (iii) as igualdades em (\ref{eq 3}), (iv) a Desigualdade de H\"older para $\frac1p + \frac{1}{p'} = 1$.
It follows that
\begin{align*} \left( \sum_{j=m}^k |(B_j)_L(z) y^*(y_j)|^p \right)^{\frac{1}{p}}& = %\left|\sum_{j=m}^k \theta_j(T_j)_L(z) y^*(y_j)\right| =
 \left|y^*\left(\sum_{j=m}^k \theta_j(B_j)_L(z) y_j\right)\right|\leq %\|y'\|\cdot \left\|\sum_{j=m}^k \theta_j(T_j)_L(z) y_j \right\| \leq
 \left\|\sum_{j=m}^k \theta_j(B_j)_L(z) y_j \right\|\\
& = \left\|\left(\sum_{j=m}^k (B_j)_L \otimes (\theta_jy_j)\right)(z) \right\| %\leq \left\|\sum_{j=m}^k (T_j)_L \otimes (\theta_jy_j) \right\| \cdot \|z\|\\
 \leq \left\|\sum_{j=m}^k (B_j)_L \otimes (\theta_jy_j) \right\| \\&  =\left\|\sum_{j=m}^k \theta_j(B_j)_L \otimes y_j \right\| = \left\|\left(\sum_{j=m}^k \theta_jB_j \otimes y_j \right)_L\right\|\\&= \left\|\sum_{j=m}^k \theta_jB_j \otimes y_j \right\|
%&= \sup_{\|\widetilde{x_i}\|\leq 1} \left\|\left(\sum_{j=m}^k \theta_jT_j \otimes y_j \right)(\widetilde{x_1}, \ldots, \widetilde{x_n}) \right\|\\
 = \sup_{\|\widetilde{x_i}\|\leq 1} \left\|\sum_{j=m}^k \theta_jB_j(\widetilde{x_1}, \ldots, \widetilde{x_n}) y_j  \right\|\\
 &= \sup_{\|\widetilde{x_i}\|\leq 1} \sup_{ \|\widetilde{y^*}\|\leq 1} \left|\widetilde{y^*}\left(\sum_{j=m}^k \theta_jB_j(\widetilde{x_1}, \ldots, \widetilde{x_n}) y_j \right) \right|\\
 & = \sup_{\|\widetilde{x_i}\|\leq 1, \|\widetilde{y^*}\|\leq 1} \left|\sum_{j=m}^k \theta_jB_j(\widetilde{x_1}, \ldots, \widetilde{x_n}) \widetilde{y^*}(y_j) \right|\\
% & \leq \sup_{\|\widetilde{x_i}\|\leq 1, \|\widetilde{y'}\|\leq 1} \sum_{j=m}^k \left| \theta_jT_j(\widetilde{x_1}, \ldots, \widetilde{x_n}) \widetilde{y'}(y_j) \right|\\
 &\leq  \sup_{\|\widetilde{x_i}\|\leq 1, \|\widetilde{y^*}\|\leq 1} \left( \sum_{j=m}^k |\theta_j|^{p^*}\right)^{\frac{1}{p^*}}\cdot \left(\sum_{j=m}^k  |B_j(\widetilde{x_1}, \ldots, \widetilde{x_n}) \widetilde{y^*}(y_j) |^p\right)^{\frac1p}\\&
 =  \sup_{\|\widetilde{x_i}\|\leq 1, \|\widetilde{y^*}\|\leq 1}\left(\sum_{j=m}^k  |B_j(\widetilde{x_1}, \ldots, \widetilde{x_n}) \widetilde{y^*}(y_j) |^p\right)^{\frac1p}.
\end{align*}
Taking the supremum over $\|z\|\leq 1$ and $\|y^*\|\leq 1$ we get
\begin{equation*}\label{eq 5}\sup_{\|z\|\leq 1, \|y^*\|\leq 1}\left( \sum_{j=m}^k |(B_j)_L(z) y^*(y_j)|^p \right)^{\frac{1}{p}} \leq \sup_{\|x_i\|\leq 1, \|y^*\|\leq 1}\left(\sum_{j=m}^k  |B_j(x_1, \ldots, x_n) y^*(y_j) |^p\right)^{\frac1p}
\end{equation*}
whenever $k > m$. Therefore, a usual interchange of suprema argument gives
\begin{align*} \sup_{\|z\|\leq 1, \|y^*\|\leq 1}&\left( \sum_{j=1}^\infty |(B_j)_L(z) y^*(y_j)|^p \right)^{\frac{1}{p}} = %\sup_{\|z\|\leq 1, \|y'\|\leq 1} \sup_k\left( \sum_{j=1}^k |(T_j)_L(z) y'(y_j)|^p \right)^{\frac{1}{p}}\\
   \sup_k \sup_{\|z\|\leq 1, \|y^*\|\leq 1}\left( \sum_{j=1}^k |(B_j)_L(z) y^*(y_j)|^p \right)^{\frac{1}{p}}\\
& \leq  \sup_k \sup_{\|x_i\|\leq 1, \|y^*\|\leq 1}\left(\sum_{j=1}^k  |B_j(x_1, \ldots, x_n) y^*(y_j) |^p\right)^{\frac1p}\\
%& =  \sup_{\|x_i\|\leq 1, \|y'\|\leq 1}\sup_k\left(\sum_{j=1}^k  |T_j(x_1, \ldots, x_n) y'(y_j) |^p\right)^{\frac1p}\\
& = \sup_{\|x_i\|\leq 1, \|y^*\|\leq 1}\left(\sum_{j=1}^\infty  |B_j(x_1, \ldots, x_n) y^*(y_j) |^p\right)^{\frac1p} < \infty,
\end{align*}
and
\begin{align*} \lim_m \sup_{\|z\|\leq 1, \|y^*\|\leq 1}&\left( \sum_{j=m}^\infty |(B_j)_L(z) y^*(y_j)|^p \right)^{\frac{1}{p}} %=  \lim_m\sup_{\|z\|\leq 1, \|y'\|\leq 1} \sup_k\left( \sum_{j=m}^k |(T_j)_L(z) y'(y_j)|^p \right)^{\frac{1}{p}}\\
   =\lim_m  \sup_k \sup_{\|z\|\leq 1, \|y^*\|\leq 1}\left( \sum_{j=m}^k |(B_j)_L(z) y^*(y_j)|^p \right)^{\frac{1}{p}}\\
&  \leq  \lim_m \sup_k \sup_{\|x_i\|\leq 1, \|y^*\|\leq 1}\left(\sum_{j=m}^k  |B_j(x_1, \ldots, x_n) y^*(y_j) |^p\right)^{\frac1p}\\
%& =  \lim_m \sup_{\|x_i\|\leq 1, \|y'\|\leq 1}\sup_k\left(\sum_{j=m}^k  |T_j(x_1, \ldots, x_n) y'(y_j) |^p\right)^{\frac1p}\\
& = \lim_m \sup_{\|x_i\|\leq 1, \|y^*\|\leq 1}\left(\sum_{j=m}^\infty  |B_j(x_1, \ldots, x_n) y^*(y_j) |^p\right)^{\frac1p} =0,
\end{align*}
because $\sum\limits_{j=1}^{\infty} \lambda_{j}B_{j} \otimes y_{j}$ is a  hyper-$\sigma(p)$-nuclear representation of $A$. This proves that $\sum\limits_{j=1}^\infty \lambda_{j}(B_{j})_L \otimes y_{j}$ is a $\sigma(p)$-nuclear representation of $A_L$, hence $A_L$ is a $\sigma(p)$-nuclear linear operator, and
\begin{align*}\|A_L\|_{{\cal N}_{\sigma(p)}}& \leq \|(\lambda_j)_{j=1}^\infty\|_{p^*}\cdot \sup\limits_{\|z\|\leq 1, y^* \in B_{F'}} \bigg( \sum\limits_{j=1}^{\infty} |(B_L)_{j}(z)y^*(y_j)|^{p} \bigg)^{\frac{1}{p}}\\
 & \leq \|(\lambda_j)_{j=1}^\infty\|_{p^*}\cdot \sup\limits_{x_{i} \in B_{E}, y^* \in B_{F^*}} \bigg( \sum\limits_{j=1}^{\infty} |B_{j}(x_{1},\ldots,x_{n})y^*(y_j)|^{p} \bigg)^{\frac{1}{p}}\\& \leq (1 + \varepsilon)\|A\|_{{\cal HN}\sigma(p)}
\end{align*}
for every $\varepsilon > 0$. Once again from \cite[Propositions 3.2 and 3.7]{prims} it follows that $A \in {\cal N}_{\sigma(p)} \circ {\cal L} (E_1, \ldots, E_n;F)$ and $\|A\|_{{\cal N}_{\sigma(p)} \circ {\cal L} } = \|A_L\|_{{\cal N}_{\sigma(p)}} \leq \|A\|_{{\cal HN}\sigma(p)}$.
%. From [Prims, ???] it follows that
%$A \in {\cal N}_{\sigma(p)} \circ {\cal L} (E_1, \ldots, E_n;F)$ and . E, combinando com (\ref{eq 000}), segue também que
%
%para todo $\varepsilon > 0$, onde a primeira igualdade segue da Proposição  Botelho/Pilar/Daniel. Fazendo $\varepsilon \rightarrow 0$, concluímos que $\|A\|_{{\cal N}_{\sigma(p)} \circ {\cal L} } \leq \|A\|_{{\cal H}\sigma(p)}$.
\end{proof}

Our last application represents linear functionals on spaces of hyper-$\sigma(p)$-nuclear multilinear operators as quasi-$\tau(p)$-summing linear operators (cf. Example \ref{exclass}(b)).

\begin{theorem} Let $1 \leq p < \infty$ and suppose that ${\cal L}(E_1, \ldots, E_n)$ has the bounded approximation property. Then
$$[\mathcal{HN}_{\sigma(p)}(E_{1},\ldots,E_{n};F)]^* \stackrel{{\cal B}_n}{=} \Pi_{q\tau(p)}(\mathcal{L}(E_{1},\ldots,E_{n});F^*) $$
for every Banach space $F$.
\end{theorem}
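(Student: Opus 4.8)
The plan is to reproduce the three-step scheme used in Theorems \ref{thapp} and \ref{repHN} and in the hyper-$(s,r)$-nuclear case: first establish the corresponding \emph{linear} representation, then transport it to arbitrary $n$ by Theorem \ref{tthh}, and finally replace the composition hyper-ideal by ${\cal HN}_{\sigma(p)}$ via Lemma \ref{llemma}. Concretely, the whole theorem reduces to the claim that
\begin{equation*}
[{\cal N}_{\sigma(p)}(E;F)]^* \stackrel{{\cal B}}{=} \Pi_{q\tau(p)}(E^*;F^*)
\end{equation*}
whenever $E^*$ has the bounded approximation property. Granting this, I would observe that $\Pi_{q\tau(p)}$ is a left semi-operator ideal by Example \ref{exclass}(b), that ${\cal N}_{\sigma(p)}$ is a Banach operator ideal, and that the bounded approximation property is a geometric property of Banach spaces. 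Applying Theorem \ref{tthh} with ${\cal I} = {\cal N}_{\sigma(p)}$, $\alpha = \Pi_{q\tau(p)}$, $P_1$ the bounded approximation property and $P_2$ the property shared by every Banach space, the implication (a) $\Rightarrow$ (b) yields $[{\cal N}_{\sigma(p)}\circ{\cal L}(E_1,\ldots,E_n;F)]^* \stackrel{{\cal B}_n}{=} \Pi_{q\tau(p)}({\cal L}(E_1,\ldots,E_n);F^*)$ exactly when ${\cal L}(E_1,\ldots,E_n)$ has the bounded approximation property, and Lemma \ref{llemma} (the isometric identity ${\cal HN}_{\sigma(p)} = {\cal N}_{\sigma(p)}\circ{\cal L}$) finishes the argument.

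So the real content is the linear representation, which I would either quote from \cite{ximenaLAA}, where both ${\cal N}_{\sigma(p)}$ and $\Pi_{q\tau(p)}$ originate, or establish directly as follows. That ${\cal B}$ is well defined and contractive is the linear case of Proposition \ref{firstpro}(a), and its injectivity follows from Proposition \ref{firstpro}(b) together with the density of the finite rank operators, which is the linear case of Lemma \ref{lemmel}. To see that ${\cal B}(\phi)$ lands in $\Pi_{q\tau(p)}(E^*;F^*)$ with $\|{\cal B}(\phi)\|_{q\tau(p)} \leq \|\phi\|$, I would fix $x_1^*,\ldots,x_m^* \in E^*$ and $y_1,\ldots,y_m \in F$, use the duality $(\ell_p^m)^* = \ell_{p^*}^m$ to pick scalars $(\theta_j)_{j=1}^m$ with $\|(\theta_j)_{j=1}^m\|_{p^*} = 1$ and $\left(\sum_j |{\cal B}(\phi)(x_j^*)(y_j)|^p\right)^{1/p} = \phi\left(\sum_j \theta_j\, x_j^*\otimes y_j\right)$, and then bound the $\sigma(p)$-norm of the finite rank operator $\sum_j \theta_j\, x_j^*\otimes y_j$ by the value of this very representation. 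Goldstine's theorem identifies $\sup_{x\in B_E}$ with $\sup_{x^{**}\in B_{E^{**}}}$ in the defining seminorm of $\Pi_{q\tau(p)}$, so the two relevant seminorms coincide and the estimate is sharp.

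The step I expect to be the main obstacle is the surjectivity together with the reverse norm estimate, and this is precisely where the bounded approximation property is indispensable. Given $T \in \Pi_{q\tau(p)}(E^*;F^*)$, the assignment $\phi\left(\sum_j x_j^*\otimes y_j\right) := \sum_j T(x_j^*)(y_j)$ is unambiguously defined on the algebraic tensor product $E^*\otimes F$, but the crucial inequality $|\phi(v)| \leq \|T\|_{q\tau(p)}\cdot\|v\|_{\sigma(p)}$ cannot be read off summand by summand: a $\sigma(p)$-nuclear representation $v = \sum_j \lambda_j x_j^*\otimes y_j$ converges to $v$ only in the $\sigma(p)$-operator norm and in general \emph{not} in the projective tensor norm, so $\phi$ may not be evaluated on it term by term. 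The bounded approximation property of $E^*$ is what repairs this: approximating the identity of $E^*$ by a bounded net of finite rank operators and passing to the limit justifies $\phi(v) = \sum_j \lambda_j T(x_j^*)(y_j)$ for an arbitrary representation, after which H\"older's inequality and the quasi-$\tau(p)$ inequality, combined with the seminorm identification above, deliver the bound. Extending $\phi$ by continuity to the $\sigma(p)$-dense subspace of finite rank operators and checking ${\cal B}(\phi) = T$ with equal norms then completes the linear representation, and with it the theorem.
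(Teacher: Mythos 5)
Your proposal is correct and follows essentially the same route as the paper: the paper's entire proof consists of citing the linear case of \cite[Theorem 3.4]{ximenaLAA} for $[{\cal N}_{\sigma(p)}(E;F)]^* \stackrel{{\cal B}}{=} \Pi_{q\tau(p)}(E^*;F^*)$ under the bounded approximation property of $E^*$, noting that $\Pi_{q\tau(p)}$ is a left semi-operator ideal by Example \ref{exclass}(b), and then applying Theorem \ref{tthh} together with Lemma \ref{llemma}. Your additional sketch of a direct proof of the linear representation is extra work the paper avoids by citation (and your phrase ``exactly when'' should be ``whenever,'' since Theorem \ref{tthh}(b) gives only the sufficiency), but neither point affects the validity of the argument.
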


\begin{proof} The linear case of \cite[Theorem 3.4]{ximenaLAA} gives that   $[{\cal N}_{\sigma(p)}(E;F)]^* \stackrel{{\cal B}}{=} \Pi_{q\tau(p)}(E^*,F^*)$ whenever $E^*$ has the bounded approximation property. Since $\Pi_{q\tau(p)}$ is a left semi-operator ideal (Example \ref{exclass}(b)), the result follows from Theorem \ref{tthh} and Lemma \ref{llemma}.
\end{proof}

\medskip

\noindent{\bf Acknowledgments.} The authors thank Ariosvaldo M. Jatob\'a and Ewerton R. Torres for helpful conversations on the subject of this paper.

\bigskip

\noindent Faculdade de Matem\'atica~~~~~~~~~~~~~~~~~~~~~~Instituto de Matem\'atica e Estat\'istica\\
Universidade Federal de Uberl\^andia~~~~~~~~ Universidade de S\~ao Paulo\\
38.400-902 -- Uberl\^andia -- Brazil~~~~~~~~~~~~ 05.508-090  -- S\~ao Paulo -- Brazil\\
e-mail: botelho@ufu.br ~~~~~~~~~~~~~~~~~~~~~~~~~e-mail: raquelwood@ime.usp.br


\begin{thebibliography}{99}\small

\vspace*{-0.5em}
\bibitem{alencar} R. Alencar, {\it An application of Singer's theorem to homogeneous polynomials}, Banach spaces (M\'erida, 1992), Contemp. Math., {\bf 144}, Amer. Math. Soc., Providence, RI, 1993, 1--8.

\vspace*{-0.5em}
\bibitem{ximenaLAA}  G. Botelho, X. Mujica, {\it Spaces of $\sigma(p)$-nuclear linear and multilinear operators on Banach spaces and their duals}, Linear Algebra Appl. {\bf 519} (2017), 219--237.

\vspace*{-0.5em}
 \bibitem{ximenaStudia} G. Botelho, X. Mujica, {\it Ideal extensions of classes of linear operators}, Studia Math. {\bf 247} (2019), no. 3, 285--297.

\vspace*{-0.5em}
 \bibitem{prims} G. Botelho, D. Pellegrino, P. Rueda, {\it On composition ideals of multilinear mappings and homogeneous polynomials}, Publ. Res. Inst. Math. Sci. {\bf 43} (2007), no. 4, 1139--1155.

\vspace*{-0.5em}
 \bibitem{leticia} G. Botelho, L. Polac, {\it A polynomial Hutton theorem with applications}, J. Math. Anal. Appl. {\bf 415} (2014), no. 1, 294--301.

\vspace*{-0.5em}
\bibitem{ewertonlaa} G. Botelho, E. R. Torres, {\it Hyper-ideals of multilinear operators}, Linear Algebra Appl. {\bf 482} (2015), 1--20.

\vspace*{-0.5em}
\bibitem{boyd} C. Boyd, A. Brown, {\it Duality in spaces of polynomials of degree at most $n$}, J. Math. Anal. Appl. {\bf 429} (2015), 1271--1290.


\vspace*{-0.5em}
\bibitem{cardim} D. Carando, V. Dimant, {\it Duality in spaces of nuclear and integral polynomials}, J. Math. Anal. Appl. {\bf 241} (2000), 107--121.

\vspace*{-0.5em}
\bibitem{carothers} N. L. Carothers, {\it A Short Course on Banach Space Theory}, London Mathematical Society Student Texts 64, Cambridge University Press, 2005.

\vspace*{-0.5em}
\bibitem{df}	
A. Defant, K. Floret, \textit{Tensor Norms and Operator Ideals}, North-Holland, 1992.

\vspace*{-0.5em}
\bibitem{djt} J. Diestel, H. Jarchow and A. Tonge, {\it Absolutely Summing Operators}, Cambridge University Press, 1995.

\vspace*{-0.5em}
\bibitem{dineen1} S. Dineen, {\it Complex Analysis in Locally Convex Spaces}, North-Holland, Amsterdam, 1981.

\vspace*{-0.5em}
\bibitem{dineen2} S. Dineen, {\it Complex Analysis on Infinite Dimensional Spaces}, Springer Monographs in Mathematics, Springer-Verlag, London, 1999.

\vspace*{-0.5em}
\bibitem{dwyer} T. A. W. Dwyer., III, {\it Convolution equations for vector-valued entire functions of nuclear bounded type}, Trans. Amer. Math. Soc. {\bf 217} (1976), 105--119.

\vspace*{-0.5em}
\bibitem{vinari} V. V. F\'avaro, A. M. Jatob\'a, {\it Hypercyclicity, existence and approximation results for convolution operators on spaces of entire functions}, Bull. Belg. Math. Soc. Simon Stevin {\bf 26} (2019), no. 5, 699--723.

\vspace*{-0.5em}
\bibitem{vinmu1} V. V. F\'avaro, J. Mujica, {\it Hypercyclic convolution operators on spaces of entire functions}, J. Operator Theory {\bf 76} (2016), 141--158.

\vspace*{-0.5em}
\bibitem{vinmu2} V. V. F\'avaro, J. Mujica, {\it Convolution operators on spaces of entire functions}, Math. Nachr. {\bf 291} (2018) 41--54.

\vspace*{-0.5em}
\bibitem{vindan} V. V. F\'avaro, D. Pellegrino, {\it Duality results in Banach and quasi-Banach spaces of homogeneous polynomials and applications}, Studia Math. {\bf 240} (2018), no. 2, 123--145.


\vspace*{-0.5em}
\bibitem{fg} K. Floret, D. Garc\'ia,  {\it  On ideals of polynomials and multilinear mappings between Banach spaces}, Arch. Math. (Basel)  {\bf 81}  (2003),  no. 3, 300--308.

\vspace*{-0.5em}
\bibitem{fh}	
K. Floret, S. Hunfeld, \textit{Ultrastability of ideals of homogeneous polynomials and multilinear mappings on Banach spaces}, Proc. Amer. Math. Soc. {\bf 130} (2002), n. 5, 1425--1435.

\vspace*{-0.5em}
\bibitem{gupta1} C. P. Gupta, {\it Convolution operators and holomorphic mappings on a Banach space}, S\'eminaire d'Analyse Moderne, No. 2, Dept. Math., Universit\'e de Sherbrooke, Qu\'ebec, 1969.

\vspace*{-0.5em}
\bibitem{gupta2} C. P. Gupta, {\it On the Malgrange theorem for nuclearly entire functions of bounded type on a Banach space}, Nederl. Akad. Wetensch. Proc. Ser. A73 = Indag. Math. {\bf 32} (1970), 356--358.

\vspace*{-0.5em}
\bibitem{lindenstrauss} J. Lindenstrauss and L. Tzafriri, {\it Classical Banach Spaces II: Function Spaces}, %Ergebnisse der Mathematik und ihrer Grenzgebiete [Results in Mathematics and Related Areas] 97,
    Springer, Berlin, 1979.

\vspace*{-0.5em}
\bibitem{matos} M. C. Matos, {\it On multilinear mappings of nuclear type}, Rev. Mat. Univ. Complut. Madrid {\bf 6} (1993), no. 1, 61--81.

 \vspace*{-0.5em}
\bibitem{mujicatams} J. Mujica, {\it Linearization of bounded holomorphic mappings on Banach spaces}, Trans. Amer. Math. Soc. {\bf 324} (1991), 867--887.

\vspace*{-0.5em}
\bibitem{mujica} J. Mujica, {\it Complex Analysis in Banach Spaces}, Dover Publ., 2010.

 \vspace*{-0.5em}
\bibitem{muro} S. Muro, D. Pinasco and M. Savransky, {\it Strongly mixing convolution operators on Fr\'echet spaces of holomorphic functions}, Integral Equations Operator Theory {\bf 80} (2014), no. 4, 453--468.

\vspace*{-0.5em}
\bibitem{livropietsch} A. Pietsch, \textit{Operator Ideals}, North-Holland, 1980.

\vspace*{-0.5em}
\bibitem{pietsch83}	
A. Pietsch, \textit{Ideals of multilinear functionals}, In: Proceedings of the second international conference on operator algebras, ideals, and their applications in theoretical physics (Leipzig, 1983). v. 67, 185--199, 1983.

\vspace*{-0.5em}
\bibitem{teseryan} R. A. Ryan, {\it Applications of topological tensor products to infinite dimensional holomorphy}, Thesis--Trinity College Dublin, 1980.

\vspace*{-0.5em}
\bibitem{ryan} R. A. Ryan, {\it Introduction to Tensor Products of Banach Spaces}, Springer, 2002.

\vspace*{-0.5em}
\bibitem{velanga} T. Velanga, {\it Ideals of polynomials between Banach spaces revisited}, Linear Multilinear Algebra {\bf 66} (2018), no. 11, 2328--2348.

\end{thebibliography}
\end{document}